\newcommand{\R}{{\mathbb R}}
\newcommand{\p}{\partial}
\newcommand{\fr}{\frac}
\newcommand{\la}{\langle}
\newcommand{\ra}{\rangle}
\newcommand{\na}{\nabla}
\newcommand{\e}{\epsilon}
\newcommand{\be}{\begin{equation}}
\newcommand{\ba}{\begin{aligned}}
\newcommand{\bee}{\begin{equation*}}
\newcommand{\ee}{\end{equation}}
\newcommand{\ea}{\end{aligned}}
\newcommand{\eee}{\end{equation*}}
\newcommand{\ds}{\displaystyle}
\theoremstyle{plain}
\newtheorem{theorem}{Theorem}[section]
\newtheorem{corollary}[theorem]{Corollary}
\newtheorem{cor}[theorem]{Corollary}
\newtheorem{lemma}[theorem]{Lemma}
\newtheorem{prop}[theorem]{Proposition}
\newtheorem{claim}{Claim}[section]
\theoremstyle{remark}
\newtheorem{remark}[theorem]{Remark}
\theoremstyle{definition}
\newtheorem{definition}[theorem]{Definition}
\numberwithin{equation}{section}
\begin{document}
\title{Convergence of  Gauss curvature flows to translating solitons}
\author{Beomjun Choi}
\address{{\bf Beomjun Choi:} 
{Department of Mathematics, POSTECH, 77 Cheongam-Ro, Nam-Gu, Pohang, Gyeongbuk 37673, Republic of Korea}}
\email{{bchoi@postech.ac.kr}}
\author{Kyeongsu Choi}
\address{{\bf Kyeongsu Choi:}  School of Mathematics, Korea Institute for Advanced Study, 85 Hoegiro, Dongdaemun-gu, Seoul 02455, Republic of Korea}
\email{choiks@kias.re.kr}
\author{Panagiota Daskalopoulos}
\address{{\bf Panagiota Daskalopoulos:} Department of Mathematics, Columbia University, 2990 Broadway, New York,  NY 10027, USA}
\email{pdaskalo@math.columbia.edu}

\begin{abstract} We address the asymptotic behavior of the $\alpha$-Gauss curvature flow,  for $\alpha >1/2$,  with  a complete non-compact convex {initial} hypersurface which is contained in a cylinder of a bounded cross section. We show that  the flow converges, as $t \to +\infty$,  locally smoothly to a translating soliton which is uniquely determined by the  asymptotic cylinder of  the initial hypersurface.

\end{abstract}
\maketitle
\tableofcontents

\section{Introduction}
Given $\alpha>0$, the $\alpha$-Gauss curvature flow ($\alpha$-GCF in abbreviation) is a one-parameter family of embeddings $F:M^n\times [0,T) \to \mathbb{R}^{n+1}$ such that for each $t\in [0,T)$, $F(M^n,t)=\Sigma_t$ is a complete convex hypersurface in $\mathbb{R}^{n+1}$, and $F(\cdot,t)$ satisfies
\begin{equation}
\tfrac{\partial}{\partial t}F(p,t)=-K^\alpha(p,t) \nu (p,t).
\end{equation}
{Here,} $K(p,t)$ is the Gauss curvature of $\Sigma_t$ at $F(p,t)$, and $\nu(p,t)$ is the unit normal vector of $\Sigma_t$ at $F(p,t)$ pointing  outward of the convex hull of $\Sigma_t$.

\medskip

The classical Gauss curvature flow (GCF), the $\alpha=1$ case, was first introduced by W. Firey \cite{Fi} to describe the shape of worn stones and the asymptotic behavior when it disappears. In \cite{Fi}, W. Firey proved that if a closed strictly convex solution to the GCF in $\mathbb{R}^3$ has the central symmetry, then it converges to a round sphere after rescaling. Later, B. Andrews \cite{An99} removed the central symmetry condition. In higher dimensions $n \geq 3$, P. Guan and L. Ni \cite{GN} obtained the convergence to a self-shrinking soliton after rescaling, and K. Choi and P. Daskalopoulos \cite{CD} showed the uniqueness of self-shrinking {solitons}. Namely, a closed strictly convex solution to the GCF  in $\mathbb{R}^{n+1}$ converges to a round sphere after rescaling.

\medskip

In addition to the {classical} case $\alpha=1$, the asymptotic behavior of the  $\alpha$-GCF {also} has been widely studied. 
In particular, in the $\alpha=\tfrac{1}{n+2}$ case, an affine transform of a solution  remains as a solution, and thus we call the $\tfrac{1}{n+2}$-GCF as the affine normal flow. E. Calabi \cite{Ca72} showed that a self-shrinking soliton to the affine normal flow is an ellipsoid. (See also \cite{BCD} for an alternative proof.) B. Andrews \cite{An96} obtained the convergence of the closed affine normal flow to an ellipsoid after rescaling.

In the range of $\alpha> \tfrac{1}{n+2}$, the convergence of the closed $\alpha$-GCF to a round sphere after rescaling has been shown by B. Chow \cite{Ch85} for $\alpha=\tfrac{1}{n}$, and by B. Andrews and X. Chen \cite{AC} for $\tfrac{1}{2}\leq \alpha \leq 1$ and $n=2$. Later, for the all $\alpha >\tfrac{1}{n+2}$ B. Andrews, P. Guan and L. Ni \cite{AGN} showed the convergence to a self-similar soliton after rescaling. Moreover S. Brendle, K. Choi, and  P. Daskalopoulos \cite{BCD} proved the uniqueness of self-shrinking solitons. Namely{, for $\alpha>\tfrac{1}{n+2}$,} a closed strictly convex solution to the $\alpha$-GCF {in $\mathbb{R}^{n+1}$} converges to a round sphere after rescaling.

{In the range of} small powers $\alpha \in (0,\frac{1}{n+2})$, the asymptotic behavior remains as an open problem. B. Andrews classified closed self-shriking solitons in the curve case $n=1$  \cite{An03}, and showed the existence of non-trivial closed self-shrinking solitons in higher dimensions  \cite{An00}.

\bigskip

Regarding the non-compact case, the translating solitons to the $\alpha$-GCF have been classified for $\alpha=\tfrac{1}{n+2}$ and $\alpha >\tfrac12$. In the affine normal case $\alpha=\tfrac{1}{n+2}$, the translating solitons are paraboloids. The $n=2$ case showed first by K. J\"orgens \cite{Jo}, and later by  J.C.C. Nitsche \cite{Ni} with another proof by using the complex analysis. E. Calabi \cite{Ca58} extended the result for $n\leq 5$, and A.V. Pogorelov \cite{Po} proved for all dimensions. S.Y. Cheng and S.T. Yau \cite{CY} provided an alternative proof by using the affine geometry. {See also the recent classification result \cite{CCK} of K.Choi, B.Choi and S.Kim for the case $n=2$ and $\alpha<\frac{1}{4}$.} 

 \medskip

In \cite{Ur1,Ur2}, J. Urbas  showed that  every translating soliton for $\alpha>\tfrac12$ is contained in a bounded cylinder {$\overline{\Omega} \times \mathbb{R}$\footnote{In this paper, $\Omega\subset \mathbb{R}^n$ denotes an open set.}}, namely $\Omega \subset \mathbb{R}^n$ is bounded. Moreover, if $\alpha >\tfrac12$ then given a {bounded convex body\footnote{In this paper, we say that $\mathcal{K}$ is a bounded convex body if it is a compact convex set with non-empty interior. In addition, an unbounded convex body means an unbounded closed convex set with non-empty interior.} $\overline{\Omega}\subset \mathbb{R}^n$} there exists a translating soliton asymptotic to {$ \partial\Omega \times \mathbb{R}$}. Furthermore, for each {bounded convex body  $\overline{\Omega}$}, the translating soliton is unique up to translations. One the other hand, for {small powers} $\alpha \in (0,\tfrac12]$, H. Jian and X.J. Wang \cite{JW} showed the existence of infinitely many entire translating solitons.

\medskip

Recently the authors \cite{CCD} showed the convergence to a translating soliton for $n=1$ and $\alpha>\tfrac12$. In this paper, we establish  its higher dimensional result for $n \geq 2$ as follows. 
{
\begin{theorem}\label{thm-main} Let $ \mathcal{K}_0\subset\mathbb{R}^{n+1}$ be an unbounded convex body asymptotic to a convex cylinder $\overline{\Omega}\times \mathbb{R} (\neq \mathcal{K}_0)$ with the bounded section $\overline{\Omega}\subset\mathbb{R}^n$. Then, given $\alpha \geq 1$, the viscosity solution\footnote{See Definition \ref{def-weaksol}.} to the $\alpha$-Gauss curvature flow from the initial hypersurface $\Sigma_0=\partial\mathcal{K}_0$  locally smoothly converges   to the translating soliton asymptotic to $\partial \Omega \times \mathbb{R}$ as $t \to +\infty$.
\end{theorem}
}

\medskip

\textbf{Local convergence}:  The viscosity flow $\Sigma_t$ is asymptotic to the  initial asymptotic cylinder, say $\partial\Omega \times \mathbb{R}$, for all time by Theorem \ref{thm-exist1} and thus $\Sigma_t$ can be written as  convex graphs on a fixed domain\[\Sigma_t = \p \, \{ x_{n+1} > u(x,t)\,: \, x\in  \Omega  \}. \] {The local smooth} convergence in the statement of the above theorem {implies} the $C^{\infty}_{loc}(\Omega)$ convergence of the functions $u(\cdot,t) - \inf_{x\in \text{int}(\Omega)} u(x,t)$ to {$u_{\Omega}\in C^{\infty}(\Omega)$}, which  represents the {translating} soliton asymptotic to {$\partial\Omega\times \mathbb{R}$. If the weakly convex domain $\Omega$ is not strictly convex,} then the corresponding translating soliton may touch  the boundary of the cylinder  and have flat sides. {(See the work  by K. Choi,  P. Daskalopoulos, and K.A. Lee in \cite{CDL}.)} Therefore,  {the} smooth convergence up to boundary is not expected.  

\bigskip

\textbf{Viscosity solution}: We introduce the notion of {the} viscosity solutions to  $\alpha$-GCF in Definition \ref{def-weaksol} to state and prove the convergence of flows from weakly convex {non-smooth} initial hypersurfaces. The existence and  uniqueness of the  viscosity flow is shown in  Theorem \ref{thm-exist1}. Note that if $\Sigma_0$ is weakly convex and has flat sides, the solution $\Sigma_t$ preserves the flat sides for a certain amount of time by the result of R. Hamilton \cite{Ha94}. See also the optimal regularity of an evolving flat side for  short time \cite{DH} and for long time \cite{DL}. Regardless of the regularity of $\Sigma_0$,  for each $ \Omega'\subset\subset  \Omega $ we show that the flow $\Sigma_t$  becomes smooth and strictly convex  in  $ \Omega'\times \mathbb{R}$ for large time $t\gg 1$ and smoothly converges to the translating soliton. {In our subsequential work \cite{CCD2}, we show the uniqueness of ancient solutions which are asymptotic to a convex cylinder $\partial  \Omega \times \mathbb{R}$ and we use Theorem \ref{thm-main}, 
with the notion of viscosity solution,  in a crucial way. Indeed, some ancient solutions are not of class $C^2$ \cite{CDL,CCD2}.}

\bigskip

{ \textbf{Additional steps in higher dimensions}: Compared to the $n=1$ case \cite{CCD}, the entropy formulas here become more involved so we provide them in Appendix. Moreover, for the local smooth convergence, one needs to  establish local upper and lower bounds on the principal curvatures, which are independent of the regularity of the initial data $\Sigma_0=\partial \mathcal{K}_0$. 
Since the  linearized operator highly degenerates along horizontal directions,  in Section \ref{sec:local_convex} we introduce some geometric ideas and establish new estimates.
}
 
 \bigskip

 For small $\alpha \in (\frac{1}{2} ,1)$ {the same result holds under} the technical assumption  that {$\Sigma_0$} can be approximated by {closed} hypersurfaces with uniform bounds for ${\int K^\alpha\,  dg}$ and ${ (\alpha-1)\int PK^\alpha \, dg}$, where $P$ is defined at \eqref{eq-def pij}.
Notice that $\int K^\alpha dg$ and $(\alpha-1)\int PK^\alpha dg$ denote  the total speed and total acceleration, respectively. See Lemma \ref{lem-firstderivative}. Our result for $\alpha \in (\frac{1}{2},1)$ states as follows: 

{
\begin{theorem} \label{thm-main2} Let $ \mathcal{K}_0\subset\mathbb{R}^{n+1}$ be an unbounded convex body asymptotic to a convex cylinder $\overline{\Omega}\times \mathbb{R} (\neq \mathcal{K}_0)$ with  bounded section $\overline{\Omega}\subset\mathbb{R}^n$. Suppose that given $\alpha \in (\frac{1}{2},1)$, there is a sequence of bounded strictly convex bodies $\mathcal{K}_i$ with smooth boundaries $\Sigma_i=\p\mathcal{K}_i$ which increases to $\Sigma_0=\partial \mathcal{K}_0$ \rm (\em i.e. $\mathcal{K}_{k}\subset \mathcal{K}_{k+1}$ and $ {\partial }( {\cup_i \mathcal{K}_i}) = {\partial }\mathcal{K}_0$\rm ) \em with uniform upper bounds for $\int_{\Sigma_i} K^\alpha dg$ and $(\alpha-1)\int_{\Sigma_i} PK^\alpha dg$. Then, the viscosity solution to the $\alpha$-Gauss curvature flow converges  locally smoothly to the translating soliton asymptotic to $\partial\Omega\times \mathbb{R}$ as   $t \to +\infty$. 
\end{theorem}

 \medskip

  \textbf{Convergence with small $\alpha \in (\frac{1}{2},1)$}: Since we have $ K  dg=  d\text{vol}_{\mathbb{S}^n}$ under the Gauss map,  upper bounds for $\int_{\Sigma} K^\alpha dg=\int_{\mathbb{S}^n}K^{\alpha-1}d\text{vol}_{\mathbb{S}^n}$ with $\alpha<1$ are related to local lower bounds for $K$. In the one-dimensional case \cite{CCD}, the local lower bounds for the curvature $\kappa$ were obtained by considering the evolution equation of $\kappa$ as a fast diffusion equation. However, in higher dimensions the Gauss curvature $K$ is not a solution to a porous medium equation any more, and thus it is hard to derive lower bounds for $K$. 
The convergence for $\alpha \in (\frac{1}{2},1)$ without the technical assumption of the bounded total speed and acceleration poses an interesting question that remained to be addressed. 

}
\bigskip

Let us remark  that in order to converge to a translating soliton, the initial hypersurface $\Sigma_0$ must  be  contained in a bounded cylinder.  Jointly with L. Kim and K.A. Lee, the second and third authors in  \cite{CDKL} showed by a barrier argument that if $\Sigma_0$ is a graph over a (possibly non-compact) domain $\Omega_0 \subset\mathbb{R}^n$, then any solution $\Sigma_t$ running from $\Sigma_0$ must remain as a graph over the same domain $\Omega_0$. On the other hand, every translating soliton for the $\alpha$-GCF with $\alpha> 1/2$ is asymptotic to a cylinder of a bounded cross {section} by \cite{Ur1,Ur2}. Hence,  it is necessary to assume that $\Sigma_0$ is contained in a bounded cylinder.

\bigskip

 The following monotonicity formula will be  used to identify the limit as a soliton.  The technical assumptions in Theorem \ref{thm-main2} were made so that this inequality can be applied. 
\begin{theorem}\label{thm-main3} Given $\alpha \geq \tfrac{n-1}{2n}$, compact strictly convex smooth solution $\Sigma_t$ to the $\alpha$-GCF satisfies\[\fr{d}{d t} \int_{\Sigma_t}  P K^\alpha dg \geq \left( n^{-1}  +2\alpha-1\right)\int_{\Sigma_t} P^2K^\alpha dg \geq 0.\]
\end{theorem}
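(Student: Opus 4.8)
The plan is to compute the evolution of $\int_{\Sigma_t} P K^\alpha \, dg$ directly, keeping careful track of the boundary-free integrations by parts available on a closed hypersurface, and then to recognize the resulting bulk term as a sum of a nonnegative quadratic expression in the second fundamental form plus the stated multiple of $\int P^2 K^\alpha \, dg$. First I would recall the standard evolution equations along the $\alpha$-GCF for a closed strictly convex $\Sigma_t$: writing the flow in terms of the support function or, equivalently, using the speed $K^\alpha$, one has $\partial_t (d g) = -\,n K^\alpha \, H^{-1}\!\cdots$ — more precisely it is cleanest to parametrize by the Gauss map, so that $dg = \det(D^2 h + h\,\mathrm{Id})\, d\theta$ on $S^n$, where $h$ is the support function, $\partial_t h = -K^\alpha$, and $K^{-1} = \det(D^2 h + h\,\mathrm{Id})$. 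In these variables $P$ (defined at \eqref{eq-def p}) becomes an explicit local expression in $h$ and its derivatives, and the whole functional $\int_{\Sigma_t} P K^\alpha \, dg$ is an integral over the fixed sphere $S^n$, which removes all the moving-domain subtleties. I would then differentiate under the integral sign.

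The second step is the algebraic heart: after differentiating, one obtains several terms, and the ones that threaten the sign are those containing derivatives of $K^\alpha$ (or of $h$) to the top order. The key maneuver is to integrate by parts on $S^n$ — legitimate precisely because $\Sigma_t$ is closed, so there is no boundary — moving derivatives off the worst factor. Using the homogeneity of $K^\alpha = \big(\det(D^2h+h\mathrm{Id})\big)^{-\alpha}$ and the divergence-structure identities for cofactor matrices (the Codazzi-type relation $\nabla_j \big(\text{cof}(D^2h+h\mathrm{Id})\big)^{ij} = 0$), the top-order terms reorganize into a full square of the form (schematically) $\int \alpha\, b^{ij}\nabla_i(\text{something})\nabla_j(\text{something}) K^\alpha dg \ge 0$, where $b^{ij}$ is the positive-definite linearization appearing in the operator $\li$. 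What remains after extracting this nonnegative piece is a zeroth-order term, and a pointwise computation — using the definition of $P$ and the Cauchy–Schwarz / arithmetic-mean inequality applied to the principal curvatures — shows it equals exactly $\big(n^{-1} + 2\alpha - 1\big)\int P^2 K^\alpha dg$ plus a further manifestly nonnegative trace-free square, under the hypothesis $\alpha \ge \tfrac{n-1}{2n}$ which is precisely what makes the relevant combination of coefficients nonnegative. The final inequality $\int P^2 K^\alpha dg \ge 0$ is trivial.

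The main obstacle I anticipate is the bookkeeping in this integration-by-parts/completion-of-squares step: there are several structurally similar top-order terms and it is easy to mis-sign one, so I would organize the computation by first establishing a clean identity for $\partial_t P$ and $\partial_t(K^\alpha dg)$ in the Gauss-map gauge, then assemble $\partial_t(P K^\alpha dg)$, and only then integrate. A secondary point requiring care is the role of the threshold $\alpha \ge \tfrac{n-1}{2n}$: I would isolate exactly which coefficient in the zeroth-order remainder must be nonnegative and verify it is equivalent to this bound, so that the statement's hypothesis is seen to be sharp for the argument. Once the evolution identity
\[
\frac{d}{dt}\int_{\Sigma_t} P K^\alpha \, dg
= \big(\text{nonnegative square terms}\big) + \big(n^{-1}+2\alpha-1\big)\int_{\Sigma_t} P^2 K^\alpha \, dg
\]
is in hand, the theorem follows at once by discarding the square terms and using $2\alpha - 1 \ge -\,n^{-1}$? — no: note $n^{-1} + 2\alpha - 1 \ge 0$ already requires $\alpha \ge \tfrac{1-n^{-1}}{2} = \tfrac{n-1}{2n}$, which is exactly the stated hypothesis, closing the proof.
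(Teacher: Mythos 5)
Your strategy is essentially the paper's: both rest on an \emph{exact} evolution identity for $\int_{\Sigma_t}PK^\alpha\,dg$, obtained by combining $\partial_t(K^\alpha dg)$ with the evolution of $P$ and integrating by parts on the closed hypersurface via the divergence-free identity $\nabla_i(b^{ij}K)=0$ (your cofactor identity in the Gauss-map gauge is the same tool), followed by the Cauchy--Schwarz inequality $b^{ik}b^{jl}P_{ij}P_{kl}\ge n^{-1}(b^{ij}P_{ij})^2$, which produces the $n^{-1}$ and shows that the threshold $\alpha\ge\tfrac{n-1}{2n}$ is exactly $n^{-1}+2\alpha-1\ge0$. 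One structural point you anticipate is off, though: in the actual identity
\[
\frac{d}{dt}\int_{\Sigma_t} PK^\alpha\,dg=\int_{\Sigma_t}\big(b^{ik}b^{jl}P_{ij}P_{kl}+(2\alpha-1)P^2\big)K^\alpha\,dg,
\]
the surviving nonnegative square is the pointwise tensor square $b^{ik}b^{jl}P_{ij}P_{kl}$ coming from the reaction term in the evolution equation for $P$, not a first-order gradient square left over from integration by parts. The integration by parts consumes the diffusion and gradient terms entirely and, together with the $-HK^\alpha$ contribution from $\partial_t\,dg$, converts them into the coefficient $2\alpha-1$ in front of $P^2$; what you then discard is the trace-free part, $b^{ik}b^{jl}P_{ij}P_{kl}-n^{-1}P^2\ge0$.

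The more serious issue is that the step you defer as ``the algebraic heart'' is the entire proof. The evolution equation
\[
\partial_t(b^{kl}P_{kl})=\alpha K^\alpha b^{ij}\nabla^2_{ij}(b^{kl}P_{kl})+2\alpha b^{ij}\nabla_iK^\alpha\nabla_j(b^{kl}P_{kl})+b^{ik}b^{jl}P_{ij}P_{kl}+\alpha(b^{kl}P_{kl})^2
\]
is a substantial computation (the paper imports it from Chow's Theorem 3.2), and verifying that the remaining cross terms collapse to exactly $(\alpha-1)\int P^2K^\alpha dg$ requires applying the integration-by-parts identity twice with the correct weights $K^\alpha$ and $K^{\alpha-1}$. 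As written, your proposal asserts that a favorable identity should emerge without exhibiting it, so it does not yet constitute a proof; to close it you must either derive $\partial_tP$ in your Gauss-map gauge or invoke Chow's computation, and then carry out precisely the bookkeeping you flag as the main risk.
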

We notice that B. Chow \cite{Ch91} obtained the above monotonicity formula for the GCF ($\alpha=1$); (see the proof of Lemma 4.3 in \cite{Ch91}). In the same paper, B. Chow also obtained a monotonicity formula (Lemma 5.2 in \cite{Ch91}) for the rescaled GCF. In \cite{An94} B. Andrews generalized the monotonicity formula for the rescaled $\alpha$-GCF. {Although Theorem \ref{thm-main3} is a straightforward generalization of \cite{Ch91}, the formula seems not to be shown or used before, so we prove it in   Appendix.}
\bigskip

\bigskip 

\section{Preliminaries}

\begin{definition} (i) $\Sigma\subset \mathbb{R}^{n+1}$ is a convex hypersurface if it is {the} boundary of {a convex body $\mathcal{K}$, which is either bounded or unbounded.} Notice that {the} convex hypersurface $\Sigma=\p\mathcal{K}$ is complete and embedded. 

(ii) For {a} $C^2$ convex hypersurface $\Sigma=\p\mathcal{K}$, we say it is strictly convex at $p\in\Sigma$ if the second fundamental form with respect to the inner normal is positive definite.

\end{definition}

Throughout this paper,  $h_{ij}$ denotes  the second fundamental form.  For a strictly convex solution, one may  consider the  inverse $b^{ij}$
of the second fundamental form $h_{ij}$,  which satisfies   $b^{ik}h_{kj} =\delta^i_j$. We also denote by $dg:=\sqrt {\det g } \, dx$  the volume form induced from the ambient Euclidean metric. let $S:= \la F, \nu\ra$ and $S_{x_0}:= \la F-x_0, \nu\ra$ denote the support functions with respect to the origin and $x_0 \in \R^{n+1}$, respectively. Moreover, we recall the following tensor $P_{ij}$ and the quantity $P$ defined by  B. Chow in \cite{Ch91}:
\begin{equation}\label{eq-def pij}
P_{ij} := \nabla^2_{ij} K^\alpha -b^{mn}\nabla_m h_{ij} \nabla_n K^\alpha + K^\alpha h^k_ih_{kj}\quad\text{ and }\quad P:=b^{ij}P_{ij}.
\end{equation}

Note that, for solutions to {the} $\alpha$-GCF, \eqref{eq-speed} implies \begin{equation}\label{eq-def p}P=\fr{1}{ \alpha K^\alpha}(\p_t K^\alpha - b^{ij}\nabla_iK^\alpha \nabla_j K^\alpha).\end{equation}

Let us recall the unique existence of translating solitons by J. Urbas and {state the result in  the way we will use in work. }
\begin{definition} [Theorem of J.Urbas \cite{Ur1, Ur2}] \label{def-u_Omega} For $\alpha>1/2$ and a given bounded convex domain $\Omega\subset \mathbb{R}^n$,   let $u_{\Omega}: \Omega \to \mathbb{R}$ denote  the graph function of the unique translating soliton which is asymptotic to $\p \Omega \times \mathbb{R}$, it moves  in the positive $e_{n+1}$ direction,   and satisfies  $\inf u_\Omega =0$. In other words, the hypersurface given  by $\p \{ (x',x_{n+1})\in\mathbb{R}^{n+1}\,:\, x_{n+1} > { u_{\Omega}(x')}\}$ defines  the translating soliton. 
\end{definition}

\begin{remark}[The  result by Urbas in \cite{Ur2}]\label{remark-soliton}
In the case where $\Omega$ is not a strictly convex domain, it  is possible that   $\limsup_{x\to x_0} u_\Omega(x')<\infty$,  for some $x_0\in \p\Omega$.
{Hence}  the hypersurface    $\{x_{n+1}=u_\Omega(x')\}$ {is not necessarily complete.} This is the reason why  in the definition above we defined 
the translating soliton as  $\p \{ (x',x_{n+1})\in\mathbb{R}^{n+1}\,:\, x_{n+1} > {u_{\Omega} (x')}\}$.  Urbas \cite{Ur2} showed the  existence of such solitons and their  uniqueness 
among solutions realized in {a} certain  generalized sense.  To be more specific,  Urbas \cite{Ur2} showed that  if a convex function $u(x')$ defined  on $\Omega$ satisfies the translating soliton equation \begin{equation}
\det D^2u= \beta \, (1+|Du |^2)^{\fr{n+2}{2}-\fr{1}{2\alpha}}
\end{equation}
for some $\beta>0$ in the sense of Alexandrov,  and $|\mathbb{R}^n-  Du(\Omega)|=0$, then $u=u_\Omega +C$, for some  constant $C$. {\em We will use this characterization  of {solitons} in the proof  of Theorem \ref{thm-main}. }
\end{remark}

\begin{definition} \label{def-lambda}For $\alpha>1/2$ and {a} given convex bounded domain $\Omega \subset \mathbb{R}^n$, let us note the speed of {the} associated translating soliton by \be\lambda := \fr{1}{|\Omega|^\alpha}\left[\int_{\mathbb{R}^n} \fr{1}{(\sqrt{1+|p|^2})^{n+2-\fr{1}{\alpha}}}dp \right]^\alpha .\ee 
(The derivation of this formula follows from  \eqref{eq-speedlambda} and $Du(\Omega)=\R^n$). 
{Moreover, note} that when  $\alpha=1$, \be\label{eq-lambda}\lambda :=  \fr{1}{|\Omega|}\left[\int_{\mathbb{R}^n} \fr{1}{(\sqrt{1+|p|^2})^{n+1}}dp \right]= \frac{\omega_n}{2|\Omega|}\ee holds, where $\omega_n=|\mathbb{S}^n|$.
\end{definition}

We derive {the evolution equations of basic geometric quantities.}

\begin{prop} 
For strictly convex {hypersurfaces}, we have 
\begin{align}
\label{eq-nablaK}&\nabla_m K = K b^{ij}\nabla_m h_{ij}\\
\label{eq-divfree}&\nabla_i (b^{ij}K) =0\\
\label{eq-nablab}&\nabla_l b^{ij} = -b^{ip}\nabla_l h_{pq} b^{qj}.
\end{align}
For smooth strictly convex {solutions to the} $\alpha$-GCF, we have 
\begin{align}
\label{eq-metric} &\p_t g_{ij} = -2K^\alpha h_{ij}\\
\label{eq-volume} &\p_t dg = -K^\alpha Hdg\\
\label{eq-normal} &\p_t \nu = \nabla K^\alpha=\nabla_i K^\alpha \nabla^i F\\
\label{eq-hij} &\p_t h_{ij}= \nabla^2_{ij}K^\alpha-K^\alpha h_{ik}h^{k}_j\\
&\qquad = \alpha K^\alpha b^{rs}\nabla^2_{rs} h_{ij} +\alpha K^\alpha (\alpha b^{kl}b^{mn} - b^{km}b^{ln}) \nabla_i h_{mn}\nabla_j h_{kl} +\alpha K^\alpha H h_{ij} -(1+n\alpha) K^\alpha h_{ik}h^{k}_j \\
\label{eq-bij} &\p_t b^{pq} = \alpha K^\alpha b^{ij}\nabla^2_{ij} b^{pq} -\alpha K^\alpha b^{ip}b^{jq} (\alpha b^{kl}b^{mn} + b^{km} b^{ln} ) \nabla_i h_{kl} \nabla_j h_{mn} - \alpha K^\alpha H b^{pq} +(1+n\alpha)K^\alpha g ^{pq} \\
\label{eq-speed} &\p_t K^\alpha = \alpha K^\alpha b^{ij} \nabla^2_{ij}K^\alpha +\alpha HK^{2\alpha}\\
\label{eq-distsq} &\p_t |F|^2= \alpha K^\alpha b^{ij}\nabla^2_{ij} |F|^2 +2(n\alpha-1)K^\alpha S -2 K^\alpha b^{ij}g_{ij}\\
\label{eq-support} &\p_t S=\alpha K^\alpha b^{ij} \nabla^2_{ij} S+ \alpha K^\alpha HS - (1+n\alpha )K ^\alpha.
\end{align}
\begin{proof}
By $K=(\det  g^{ij}) (\det h_{ij})$
\[\ba\nabla_m K=K\nabla_m \log K=K\nabla_m \log (\det h_{ij})=Kb^{ij}\nabla_m h_{ij}.\ea\] 
Next, 
\[\ba \nabla_i (b^{ij}K) &=( \nabla_i b^{ij}) K + b^{ij} \nabla_i K 
=-b^{ik}b^{jl} (\nabla_{i} h_{kl})K +b^{ij} K b^{kl} (\nabla_i h_{kl}) =0. \ea\]
{The identity} \eqref{eq-nablab} follows from taking a derivative on $b^{ij}h_{jk}=\delta{i}_k$. 
The evolution equations  \eqref{eq-metric} - \eqref{eq-speed} are shown in \cite[Proposition 2.1]{CDKL}. Note that \[\ba(\p_t  -\alpha K^\alpha b^{ij}\nabla^2_{ij} )F &=-K^\alpha \nu -\alpha K^\alpha b^{ij} h_{ij} (-\nu)\\
&= (n\alpha -1)K^\alpha \nu.\ea\] Thus, we have 
\[\ba (\p_t -\alpha K^\alpha b^{ij})\la F,F\ra &= 2\la F, (n\alpha-1)\nu\ra -2\alpha K^\alpha b^{ij} \la \na_i F, \na_j F\ra \\
&= 2(n\alpha-1)S -2K^\alpha b^{ij}g_{ij}\ea\]and, using $\na^2_{ij}\nu = \na_i (h_{jk} \na^k F )= -h_{jk}h^{k}_i \nu +\na_kh_{ij} \na^k F$,
we obtain 
\[\ba (\p_t -\alpha K^\alpha b^{ij} \nabla^2_{ij})\la F,\nu \ra &=(n\alpha-1)K^\alpha + \la F,(\p_t -\alpha K^\alpha b^{ij}\nabla^2_{ij})\nu\ra - 2\alpha K^\alpha b^{ij} \la \nabla_i F, \nabla_j \nu \ra \\
&= (n\alpha-1)K^\alpha +\la F, \nabla K^\alpha - \alpha K^\alpha b^{ij}\nabla_k h_{ij} \nabla^k F +\alpha K^\alpha H\nu \ra -2n\alpha K^\alpha \\
&= -(n\alpha -1)K^\alpha +\alpha K^\alpha HS. \ea\]

\end{proof}
\end{prop}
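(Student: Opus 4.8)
The statement collects standard facts, so the plan is mainly organizational: separate the three pointwise tensor identities \eqref{eq-nablaK}--\eqref{eq-nablab}, which hold on any strictly convex hypersurface, from the evolution equations \eqref{eq-metric}--\eqref{eq-support} along the flow. For \eqref{eq-nablaK}, since $K=(\det g^{ij})(\det h_{ij})$ and the ambient metric is parallel, $\nabla_m\log K=\nabla_m\log\det(h_{ij})$, and Jacobi's formula $\delta\log\det A=\operatorname{tr}(A^{-1}\delta A)$ gives $\nabla_m\log\det(h_{ij})=b^{ij}\nabla_m h_{ij}$. Identity \eqref{eq-nablab} is obtained by differentiating $b^{ik}h_{kj}=\delta^i_j$ and isolating $\nabla_l b^{ij}$. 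For \eqref{eq-divfree} I would expand $\nabla_i(b^{ij}K)=(\nabla_i b^{ij})K+b^{ij}\nabla_i K$, substitute the contracted form of \eqref{eq-nablab} and \eqref{eq-nablaK}, and cancel the two resulting terms $-b^{ik}b^{jl}K\,\nabla_i h_{kl}$ and $b^{ij}b^{kl}K\,\nabla_i h_{kl}$ after relabeling indices, using the Codazzi equation $\nabla_i h_{kl}=\nabla_k h_{il}$.

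For the evolution equations the only input is $\partial_t F=-K^\alpha\nu$. Equations \eqref{eq-metric}--\eqref{eq-speed} are the standard first- and second-order variation formulas under a normal speed and are already recorded in \cite{CDKL} (Proposition 2.1), which I would cite; alternatively one re-derives them in the usual way: \eqref{eq-metric} from differentiating $g_{ij}=\langle\partial_i F,\partial_j F\rangle$ together with the Weingarten relation $\partial_i\nu=h_i^{\,k}\partial_k F$; \eqref{eq-volume} from $\partial_t\sqrt{\det g}=\tfrac12\sqrt{\det g}\,g^{ij}\partial_t g_{ij}$; \eqref{eq-normal} from $\langle\partial_t\nu,\nu\rangle=0$ together with $\langle\partial_t\nu,\partial_i F\rangle=-\langle\nu,\partial_t\partial_i F\rangle=\nabla_i K^\alpha$; the first line of \eqref{eq-hij} from differentiating $h_{ij}=-\langle\partial_i\partial_j F,\nu\rangle$, and its second (parabolic) line by commuting two covariant derivatives through a Simons-type identity and applying Codazzi and \eqref{eq-nablaK}; \eqref{eq-bij} from \eqref{eq-hij} via \eqref{eq-nablab}; and \eqref{eq-speed} from $\partial_t\log K=b^{ij}\partial_t h_{ij}-h^{ij}\partial_t g_{ij}$ together with \eqref{eq-metric} and \eqref{eq-hij}.

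The last two equations \eqref{eq-distsq}--\eqref{eq-support} are then a short computation built on the position-vector identity $\pp F=(n\alpha-1)K^\alpha\nu$, which follows from $\partial_t F=-K^\alpha\nu$ and the Gauss formula $\nabla^2_{ij}F=-h_{ij}\nu$ (so that $\li F=-n\alpha K^\alpha\nu$). Applying $\pp$ to $\langle F,F\rangle$ and using the product rule for the operator $\li$ yields \eqref{eq-distsq}, the cross term being the gradient contribution $-2\alpha K^\alpha b^{ij}\langle\nabla_i F,\nabla_j F\rangle=-2\alpha K^\alpha b^{ij}g_{ij}$. Applying $\pp$ to $S=\langle F,\nu\rangle$ and using the position-vector identity, \eqref{eq-normal}, and $\nabla^2_{ij}\nu=-h_{ik}h^k_j\nu+\nabla_k h_{ij}\nabla^k F$ for the second-derivative cross terms gives \eqref{eq-support}: the terms $\langle F,\nabla K^\alpha\rangle$ and $-\alpha K^\alpha b^{ij}\nabla_k h_{ij}\langle F,\nabla^k F\rangle$ cancel by \eqref{eq-nablaK}, the first-derivative cross terms contribute $-2\alpha K^\alpha b^{ij}\langle\nabla_i F,\nabla_j\nu\rangle=-2n\alpha K^\alpha$, and collecting the remaining terms with $b^{ij}h_{ij}=n$ and $b^{ij}h_{ik}h^k_j=H$ produces the stated right-hand side. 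The whole argument is routine bookkeeping; the one place that genuinely needs care is the passage to the parabolic form of \eqref{eq-hij}, with its commutation of covariant derivatives and repeated use of Codazzi, and it is precisely there that one may instead simply invoke \cite{CDKL}.
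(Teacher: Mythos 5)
Your argument is correct and follows essentially the same route as the paper: the three pointwise identities via the logarithmic derivative of $\det h_{ij}$, differentiation of $b^{ik}h_{kj}=\delta^i_j$, and cancellation (which, as you note, implicitly uses the Codazzi symmetry of $\nabla h$); the standard evolution equations by citation to Proposition 2.1 of \cite{CDKL}; and the last two equations from the position-vector identity $\pp F=(n\alpha-1)K^\alpha\nu$ together with $\nabla^2_{ij}\nu=-h_{ik}h^k_j\nu+\nabla_k h_{ij}\nabla^k F$. No gaps.
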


{Let us next introduce the following definition of viscosity solutions that we  will employ throughout this work. Similar  definitions  have been frequently used in the literature, for instance in \cite{An00,AMZ}.}
 
\begin{definition}[viscosity solution]\label{def-weaksol}
Let $\mathcal{K}_t \subset \mathbb{R}^{n+1}, t \in [0,T),$ be  a {continuous} one-parameter family of convex bodies which are either bounded or unbounded. $\Sigma_t =\p \mathcal{K}_t \subset \mathbb{R}^{n+1}, t \in [0,T)$,  is a {\em viscosity subsolution}  to the $\alpha$-GCF   if  the following holds 
for every $t_0 \in [0,T)$: for any  smooth strictly convex 
solution to the $\alpha$-GCF $\Sigma'_t=\p \mathcal{K}_t' $ with $\mathcal{K}_{t_0}' \subset \mathcal{K}_{t_0}$, the comparison $\mathcal{K}'_t \subset \mathcal{K}_t$ holds  for all $t\in[ t_0,T)$. Similarly, $\Sigma_t =\p\mathcal{K}_t$ is a 
{\em  viscosity supersolution}  to the $\alpha$-GCF if the following holds  for every $t_0 \in [0,T)$: for any 
smooth strictly convex  solution to the $\alpha$-GCF $\Sigma'_t=\p \mathcal{K}_t' $  with $\mathcal{K}_{t_0}\subset \mathcal{K}_{t_0}'$, the comparison  $\mathcal{K}_t\subset \mathcal{K}_t'$  holds  for all $t\in[ t_0,T)$.  
$\Sigma_t$ is a {\em viscosity solution}  if it is both a viscosity subsolution and a viscosity supersolution. 
\end{definition}

{ We state} the existence and uniqueness of a viscosity solution starting at any convex hypersurface $\Sigma_0= \p \mathcal{K}_0 \subset \R^{n+1}$,   compact or non-compact,  and asymptotic to a cylinder. {Its proof is rather a straightforward application of standard smooth approximations and the comparison principle. }

\begin{theorem} \label{thm-exist1} Let $\Sigma_0=\p \mathcal{K}_0 \subset \R^{n+1}$ be  a convex hypersurface. If $\Sigma_0$ is compact, then there is a unique viscosity solution $\Sigma_t$ to the $\alpha$-GCF running from  $\Sigma_0$ and defined over $t \in [0,T)$ for some $T <+\infty$. If $\Sigma_0$ is {non-compact} and asymptotic to a cylinder {$\partial\Omega\times \mathbb{R}$} then there is a unique viscosity solution $\Sigma_t$  to the $\alpha$-GCF running from $\Sigma_0$ defined for all $t\in [0,+\infty)$. Moreover, $\Sigma_t$ is non-compact and  asymptotic to $\Omega\times \mathbb{R}$  for all $t\in[0,\infty)$. 
\end{theorem}
\begin{proof}
Consider the first case that $\Sigma_0$ is {\em compact.} 
Choose an increasing sequence {of} convex bodies $\mathcal{K}_{i,0}$ with smooth strictly convex boundaries  $\Sigma_{i,0}$  (see Ch3.4 \cite{Schneider} for an approximation by smooth strictly convex hypersurfaces) {satisfying} $\cup_i \mathcal{K}_{i,0} = \text{int}(\mathcal{K}_0)$. Let  $\Sigma_{i,t} $ be the unique  smooth solution to the $\alpha$-GCF starting from $\Sigma_{i,0}$ (see in \cite{Ch85}). 
By the comparison principle the sequence $ \Sigma_{i,t}$ is increasing in $i$, {and hence}  the limits  $\mathcal{K}_t:= \cup_i \mathcal{K}_{i,t}$ and $\Sigma_{t}=\p\mathcal{K}_t$ exist. 
We claim that $\Sigma_t$ is a viscosity solution with initial data $\Sigma_0$. By {the} construction, $\Sigma_t$ is a viscosity supersolution.

Let us next show  that $\Sigma_t$ is a  viscosity subsolution as well. Assume  without loss of generality that $0\in \mathcal{K}_0$ {and $t_0=0$. Let $\Sigma_t'=\partial \mathcal{K}_t$ be }a smooth strictly convex $\alpha$-GCF flow with $\mathcal{K}_0' \subset \mathcal{K}_0$. 
Given {a} small $\epsilon >0$, we  consider  the rescaled solution  $(1-\epsilon) \, \Sigma'_{\tau} $,  with $\tau =(1-\e)^{ -(1+n\alpha)}\, t$ starting at  $(1-\epsilon) \, \Sigma'_0$. If $i_0$ is sufficiently  large,  $(1-\e) \mathcal{K}'_0  \subset \mathcal{K}_{i,0}$ holds if $i \geq i_0$. Thus,  the comparison principle
guarantees  $(1-\e) \mathcal{K}'_{\tau} \subset \mathcal{K}_{i,t}$.  Taking the limit  $i\to \infty$ and passing $\e\to 0$ yield the inclusion $\mathcal{K}_t' \subset \mathcal{K}_t${. This proves} that  $\Sigma_t$ is a viscosity  subsolution. We then conclude that  $\Sigma_t$ is a viscosity  solution.

For the uniqueness assertion, let us assume  that we have another viscosity solution $\Sigma''_t$ starting at $\Sigma_0$. Then, the same argument  as above, shows that each small $\e>0$, there is $i_0 \gg 1$ such that $$(1-\e) \mathcal{K}_{i,(1-\e)^{-(1+n\alpha)} t} \subset \mathcal{K}''_{t} \subset (1+\e) \mathcal{K}_{i,(1+\e)^{-(1+n\alpha)} t}, \quad  \text{ for }i\ge i_0.$$ Taking the limit   $i\to \infty$ and passing $\e \to 0$, we conclude that $\Sigma_t = \Sigma_t''$. {The finiteness} of $T$ follows by comparing the solution with a huge spherical solution containing it.  

\bigskip

Consider the next case that $\Sigma_0$ is {\em non-compact and asymptotic to}  $\Omega\times \mathbb{R}$. Choose a sequence of increasing compact sets  $\mathcal{K}_{i,0}$ with smooth strictly convex boundaries $\Sigma_{i,0}$ such that  $\cup_i \mathcal{K}_{i,0}= \mathcal{K}_0$. Let $\Sigma_{i,t}$, $t\in[0,T_i)$, be the unique smooth strictly convex solutions to the $\alpha$-GCF and define $\mathcal{K}_t := \cup_{i} \mathcal{K}_{i,t}$ and $\Sigma_{i,t}=\p\mathcal{K}_{i,t}$ as before. Note $\Sigma_{i,t}$ exists for $t\in[0,T)$, where $T\ge \lim _{i\to\infty} T_i =(0,\infty]$. By the construction, $\Sigma_t$ is already a viscosity supersolution. Let $\Sigma'_t= \p \mathcal{K}_t'$ be a smooth strictly convex $\alpha$-GCF with $\mathcal{K}'_0\subset \mathcal{K}_0$. When $\mathcal{K}'_0$ is compact, one can use the same argument as before to show $\mathcal{K}'_t\subset \mathcal{K}_t$. Let us assume $\mathcal{K}'_0$ be   non-compact. Then $\mathcal{K}'_0$ has to be asymptotic to a cylinder $\Omega'\times \mathbb{R}$ with $\Omega'\subset \Omega$.  By the same scaling and limiting argument, we may assume $\Omega'\subset\subset \Omega$. For such a $\Sigma'_0$, {\cite{CDKL} shows that there exists a unique smooth solution (thus it is equal to $\Sigma'_t$ by the uniqueness)} for all $t\in[0,\infty)$ and the solution is written on the fixed domain $\Omega'$. Moreover, the construction in \cite{CDKL} shows $\mathcal{K}'_t$ can be approximated by an increasing sequence of compact smooth strictly convex $\alpha$-GCFs $\Sigma'_{i,t}=\p \mathcal{K}'_{i,t}$. For each $\mathcal{K}'_{j}$, there is $i_j$ such that $\mathcal{K}'_{j,0} \subset \mathcal{K}_{i_j,0}$. This implies $\mathcal{K}'_{j,t} \subset \mathcal{K}_{i_j,t} \subset \mathcal{K}_t$ and proves $\mathcal{K}'_t\subset \mathcal{K}_t$. i.e. $\mathcal{K}_t$ is a viscosity solution. This also shows $T=\infty$ since we may put a non-compact rotationally symmetric strictly convex hypersurface which is asymptotic to a round cylinder in the inside of $\mathcal{K}_0$ and apply the comparison principle. Finally, the cylinder asymptotic to $\mathcal{K}_t$  does  not shrink {along the flow}  since we can insert such a barrier arbitrarily close to the boundary of $\Omega\times \mathbb{R}$ at {the} initial time $t=0$.
  \end{proof}
  
  \begin{corollary} Let $\mathcal{K}_0$ be either a bounded convex body or an unbounded convex body which is contained in a bounded cylinder.  If $\mathcal{K}_{i,0}$ is an increasing sequence of convex bodies such that ${\partial}(\cup_{i,0}\mathcal{K}_{i,0})={\partial}  \mathcal{K}_0$, then ${\partial}(\cup_{i}\mathcal{K}_{i,t})={\partial} \mathcal{K}_t$.   Here, ${\partial} \mathcal{K}_{i,t}$ and ${\partial} \mathcal{K}_t$ are the viscosity $\alpha$-{GCFs} running from $\p \mathcal{K}_{i,0}$ and $\p \mathcal{K}_{0}$, respectively.   \end{corollary}
  
In this paper, when $\Sigma_{i,t}=\p \mathcal{K}_{i,t}$ is referred, it means approximating smooth compact strictly convex solutions of $\Sigma_t$ from inside unless otherwise stated.

\section{Local speed estimate} \label{sec-harnack}
We review the following Harnack estimate which was shown by B. Chow in \cite{Ch91}. 
\begin{theorem}[B. Chow \cite{Ch91}]\label{thm-Chow} {For a} smooth compact strictly convex solution to the $\alpha$-{\em GCF} with $\alpha>0$, there holds   
\be\label{eqn-harnK}
\fr{1}{ K^\alpha}(\p_t K^\alpha - b^{ij}\nabla_iK^\alpha \nabla_j K^\alpha  )\ge -\fr{n\alpha}{1+n\alpha} \fr{1}{t}.
\ee
\end{theorem}
This has the following consequence:
\begin{prop}\label{prop-utt} Let  $x_{n+1}=u(x',t)$ be a smooth strictly convex graphical solution to the $\alpha$-{\em GCF} with $\alpha >0$ 
over {some domain $\Omega'\subset \mathbb{R}^n$} and assume it is part of a compact smooth solution or a smooth limit of such solutions.  
Then, \be\label{eqn-utt22}
 u_{tt} \ge - \fr{n\alpha }{1+n\alpha}\fr{u_t}{t}\ee and hence, for $t_2\ge t_1>0$, 
 \be\label{eqn-u123}
 u_t({\cdot,t_2})\ge \left( \fr{t_1}{t_2}\right)^{\fr{n\alpha}{1+n\alpha}}u_t(\cdot,t_1).\ee
\begin{proof}
For any $1$-form $V_i$, $K^\alpha b^{ij} (V_i+\nabla_i\log K^\alpha)(V_j +\nabla_j \log K^\alpha)\ge0$ and the Harnack imply 

\[\p_t \log K^\alpha +2K^\alpha b^{ij}V_i \na_j \log K^\alpha  + K^\alpha b^{ij} V_iV_j \ge \fr{-n\alpha}{1+n\alpha}\fr{1}{t}.\] 

In other words, for any vector field $U^i=K^\alpha b^{ij}V_j$, \[\p_t K^\alpha +2 U^i \na_i K^\alpha + h_{ij} U^iU^j \ge \fr{-n\alpha}{1+n\alpha}\fr{K^\alpha}{t}.\]

For a graphical solution of $\alpha$-GCF, $x_{n+1} = u(x',t)$, note that $\p_t u = \fr{K^\alpha}{\la -\nu,e_{n+1}\ra}$ and

 $$\p^2_{tt} u(x',t) = (\p_t + W^i \na_i) (\fr{K^\alpha}{ \la -\nu,e_{n+1} \ra}) \quad\text{with}\quad W = \fr{K^\alpha }{\la -\nu,e_{n+1}\ra}e_{n+1}^{\text{tan}}.$$

 Here $e_{n+1}^{\text{tan}} = e_{n+1} - \la e_{n+1}, \nu \ra \nu $. Using this and $\p_t \nu = \nabla K^\alpha$, we check 

\[\ba&(\p_t + W^i \na_i) (\fr{K^\alpha }{\la -\nu,e_{n+1}\ra}) 
\\&=\fr{1}{\la -\nu,e_{n+1}\ra}\left(\p_t K^\alpha +W^i \na_i K^\alpha\right)+K^\alpha (\p_t + W^i \na_i)\fr{1}{\la -\nu,e_{n+1}\ra}
\\&=\fr{1}{\la -\nu,e_{n+1}\ra}\left(\p_t K^\alpha +2W^i \na_i K^\alpha+ h_{ij}W^iW^j \right)\ge\fr{-n\alpha}{n+\alpha} \fr{K^\alpha}{t\la -\nu,e_{n+1}\ra}.
\ea\] 

\end{proof}
\end{prop}

Suppose that $\Sigma_t$ is a non-compact viscosity $\alpha$-GCF asymptotic to {$\p\Omega\times\mathbb{R}$} and let $\{x_{n+1}=u_i(x',t)\}$ be the graph representation of the   lower parts of the approximating compact smooth strictly convex solutions $\Sigma_{i,t}$.  Let us denote by  $\Omega_{i,t}$ the spatial domain of $u_i(\cdot,t)$, { namely $\Omega_{i,t}$} is   the projection of $\Sigma_{i,t}\subset\mathbb{R}^{n+1}$ to  the hyperplane $\{x_{n+1} =0\}$. 
\begin{prop}\label{prop-harnack12} 

For each $\Omega'\subset \subset \Omega$ and $t_0>0$ there is $L>0$ {with the following significance}: for all $T>t_0$ there is $i_0$ so that \be\label{eqn-utK1}  \p_t u_i (x',t) = \fr{K^\alpha}{\la -\nu, e_{n+1}\ra}  \le L\quad \text{ for }(x',t)\in\Omega'\times [t_0,T] \text{ and }i>i_0.\ee Moreover,
for each $\Omega'\subset \subset \Omega$ there are positive constants $t_0$, $\delta$, $L$ with {the following significance}: for all $T>t_0$ there is $i_0$ so that \be\label{eqn-utK2}  0<\delta\le \p_t u_i (x',t) = \fr{K^\alpha}{\la -\nu, e_{n+1}\ra}  \quad \text{ for }(x',t)\in\Omega'\times [t_0,T] \text{ and }i>i_0.\ee

\begin{proof}
Let us assume, without loss of generality, that  $\Omega$ contains the origin and that the speed of the translating soliton defined on $\Omega$, call it $u_{\Omega}$,  is $\lambda$. Fix a small $\e_0\in(0,1/6)$ so that $\Omega'\subset\subset (1+\e_0)^{-\fr{1}{n\alpha}}\Omega$. Since $\mathcal{K}_t$ is asymptotic to $\Omega \times \mathbb{R}$ for all $t\ge0$ and $\cup_i \mathcal{K}_{i,t} = \mathcal{K}_t$, given $T'$ there is  $i_0$ such that if $i>i_0$ \be \label{eq-141}(1+\e_0)^{-\fr{1}{n\alpha}}\Omega\subset \subset \Omega_{i,t} \quad   \text{ for all }t\in[0,T'].\ee $T'$ is some number which will be chosen later.  

{By rescaling the flow}, if we define  $$\hat u(x') := (1+\e_0)^{-\fr{1}{n\alpha}}u_{\Omega}((1+\e_0)^{\fr{1}{n\alpha}}x')=u_{(1+\e_0)^{-\fr{1}{n\alpha}}\Omega}(x'),$$ 
then $\hat u$ is the translating soliton on $(1+\e_0)^{-\fr{1}{n\alpha}}\Omega$ which has {the} speed $(1+\e_0)\lambda$.
Similarly, we define the translating soliton $$\bar u(x') := (1-\e_0)^{-\fr{1}{n\alpha}}u_{\Omega}((1-\e_0)^{\fr{1}{n\alpha}}x')=u_{(1-\e_0)^{-\fr{1}{n\alpha}}\Omega}(x')$$ on $(1-\e_0)^{-\fr{1}{n\alpha}}\Omega$ which has {the} speed $(1-\e_0)\lambda$.  Depending on $\Sigma_0$, we may find a large $L>0$ such that $$\bar u(x')-\fr L2\le u(x',0) \,\,\, \text{ on } \Omega
\qquad \text{ and } \qquad u(x',0) \le \hat u(x')+\fr L2 \,\,\, \text{ on } (1+\e_0)^{-\fr{1}{n\alpha}}\Omega.$$
It follows that  there is an $i_0$ such that for  $i>i_0$, then \be\label{eq-143}\bar u(x')-L\le u_i(x',0)\text{ on } \Omega_{i,0}
\quad \text{ and } \quad u_i(x',0) \le \hat u(x')+ L\text{ on } (1+\e_0)^{-\fr{1}{n\alpha}}\Omega.\ee
Furthermore, by \eqref{eq-141} and \eqref{eq-143}, one can apply the comparison principle between $x_{n+1}=u_i(x',t)$ and two barriers so that we obtain, for $i>i_0$, $$\bar u(x')-L+(1-\e_0)\lambda t < u_i(x',t) <\hat u(x')+L+(1+\e_0)\lambda t \quad \text{ on } \, \Omega'\times [0,T'].$$
In particular, we have for $t\in[0,T']$ and $i>i_0$ $$0\le f(x',t):=(\hat u(x')+L+(1+\e_0)\lambda t) - u_i(x',t) \le 2(L+\e_0\lambda t)$$ and 
 $$0\le  g(x',t):=u_i(x',t) - (\bar u(x')-L+(1-\e_0)\lambda t)\le 2(L+\e_0\lambda t).$$
 
 \smallskip 
 {We first prove the upper bound} \eqref{eqn-utK1}. Choose $T'$ in \eqref{eq-141} by $T'=2T$. 
Suppose $\p_t u_i(x_0,t_1) =C$ at some $x_0\in \Omega'$ and $t_1\in [t_0,T]$. Then by \eqref{eqn-u123}, $\p_t u_i(x_0,t)\ge C\, \eta$ 
for some $\eta=\eta(\alpha,n) \in(0,1)$ and all  $t\in[t_1,2t_1]$.  We have 
\[\ba 0\le f(x_0,2t_1) &= f(x_0,t_1) + \int_{t_1}^{2t_1} \p_t f 
\le 2(L+\e_0\lambda t_1) +[ (1+\e_0)\lambda t_1 -  C\eta t_1]   \ea\] 
and hence ${\ds \p_t u_i(x_0,t_0)=C\le \fr{(1+3\e_0)\lambda}{\eta} + \fr{2L}{\eta t_1}}$, proving that the bound from above in \eqref{eqn-utK1} holds for any $t_1 \in [t_0, T]$ and $t_0$ fixed.

{Next, we prove the lower bound} \eqref{eqn-utK2}. To this end, suppose  $\p_t u_i(x_0,t_0) =c$ at some $x_0\in \Omega'$ and $t_0>0$. Provided $T'>t_0$ and $i>i_0$, \eqref{eqn-u123} implies that  $\p_t u_i(x_0,t)\le \gamma c$ for any $t\in[t_0/2,t_0]$ and some $\gamma=\gamma(\alpha,n)>1$.  Thus,  
\[\ba 0\le g(x_0,t_0) &= g(x_0,t_0/2) + \int_{t_0/2}^{t_0} \p_t g 
\le 2\left(L+\e_0\lambda \fr{t_0}{2}\right) +\left[\gamma c \fr{t_0}{2}- \left(1-\epsilon_0\right)\lambda \fr{t_0}{2} \right] \ea\] 
{implies} that for any  $\e_0<1/6$ we have 
\[c \ge \fr{1-3e_0}{\gamma} \lambda - \fr{4L}{\gamma t_0}\ge \fr{\lambda}{2\gamma} - \fr{4L}{\gamma t_0}.\]
Hence,  ${ \p_t u_i(x_0,t_0) =c \ge \fr{\lambda}{4\gamma}}$ if ${  t_0\ge \fr{16L}{\lambda}}$.
Let us choose $t_0 :=  {16L}/{\lambda} >0$. For every $T\ge t_0$, if we choose $T'=T$   the previous yields  the lower bound \eqref{eqn-utK2} for $i>i_0$.
\end{proof}
\end{prop}

On a strictly convex smooth solution $\Sigma_t$  we may define the Gaussian curvature $K$ as a function of the  normal vector $\nu$ at a point $p$,  i.e. we define  $\bar K (\nu,t):= K(p(\nu,t),t)$ where $p=p(\nu,t)$ is {the unique} point with $\nu(p)=\nu$.  {By the evolution of $\nu$ in \eqref{eq-normal}}, $\p_t \bar K^\alpha= \p_t K^\alpha - b^{ij}\nabla_i K^\alpha \nabla_j K^\alpha$. Hence Chow's  Harnack inequality  \eqref{eqn-harnK}  
implies 

\be \label{eq-124} \p_t \bar K^\alpha \ge -\fr{n\alpha}{1+n\alpha} \fr{\bar K^\alpha}{t}\ee
which, after integrated in time $t \in [t_1,t_2]$, gives  \be\label{eq-normalharnack}\bar K^\alpha(\cdot,t_2) \ge \left( \fr{t_1}{t_2}\right)^{\fr{n\alpha}{1+n\alpha}} \bar K^\alpha(\cdot, t_1). \ee

{A similar argument of} Proposition \ref{prop-harnack12}  applied to the support function $S(\cdot,t)$  instead of the height function $u(\cdot,t)$,  was  actually used by the authors in \cite[Section 2]{CCD}. We will need this result for {the current problem as well.   Following  similar  arguments  as in Proposition  \ref{prop-harnack12} and {\cite{CCD}},  we obtain  the following:
\begin{prop}\label{prop-harnack2} Let $\Sigma_{i,t}$ be a sequence of compact smooth strictly convex solutions which approximate {the} non-compact viscosity solution $\Sigma_t$ asymptotic to the cylinder $\partial\Omega \times \mathbb{R}$ of the bounded section $\Omega$.  For any small $\mu>0$, there are positive constants $t_0$, $\delta$ depending on $\Sigma_0$,and  $\mu$ {with the following significance}: for all $T>0$ there is $i_0$  such that, for ${\Sigma _{i,t}}$ with $i>i_0$, \[ \delta \le K^\alpha (p,t)  \quad\mbox{ if }\quad  t_0\le t\le T \text{, and }  \la -\nu(p,t),e_{n+1}\ra \ge \mu.\]
For given $t_0$, there is  $M$ depending on $\Sigma_0$ and $t_0$ {with the following significance}: for all $T>0$ there is $i_0$ such that, for ${\Sigma _{i,t}}$ with $i>i_0$,\[K^\alpha(p,t) \le M \quad\text{ if }\quad t_0\le t\le T\text{, and }\la -\nu(p,t),e_{n+1}\ra \ge 0.\]
\begin{proof}
Assume $\Omega\subset\mathbb{R}^n$ contains the origin. Define the support function  ${\ds \bar S(\nu,t) = \sup_{x\in \Sigma_t}\la x,\nu \ra }$. Let  $\epsilon_0 >0$. 
As in Proposition  \ref{prop-harnack12}, {let us consider} a translator on a slightly larger domain whose translator speed is $(1-\e_0)\lambda$. Here $\lambda$ is the speed of the translator $u_{\Omega}$ on $\Omega$. We can make that this translator contains  our  initial surface $\Sigma_0$ (and hence all $\Sigma_{i,0}$) by {translating} the translator in $-e_{n+1}$ direction. If $\bar S^+(\nu,t)$ {denotes} the support function of this translator outside, {then the comparison principle between support functions \cite[Lemma 2.6]{CCD} yields} 
\[\bar S(\nu,t) \le \bar S^+(\nu,t) = C+(1-\e_0) \, \lambda t \, \la \nu ,e_{n+1}\ra\quad\text{ on }{\Sigma _{i,t}}\cap \{\la -\nu,e_{n+1}\ra\ge0\}\quad \text{by some } \,\, C(\e_0,\Sigma_0,\alpha, n)>0 .\] 
On the other hand, by inserting a translating soliton of the speed $(1+\e_0) \lambda$ inside, we know that the  point $(L+(1+\e_0)\lambda t)e_{n+1}$
(for some $L >0$)  is located inside of $\Sigma_t$. Thus,  $\la F-(L+(1+\e_0)\lambda t)e_{n+1}, \nu \ra \ge0$ and hence, in terms of approximating solutions, for each $T'>0$ there is $i_0$ with  
\[-C+(1+\e_0)\lambda t \, \la \nu,e_{n+1}\ra \le \, \bar S(\nu,t) \quad \text{ on }{\Sigma _{i,t}} \text{ if }i>i_0 \quad \text{by some }\,\, C(\e_0,\Sigma_0,\alpha, n)>0.\]
In particular, if $i>i_0$, we have  $$0\le f(\nu,t):=\bar S^+(\nu,t) - \bar S(\nu,t) \le 2\, (C-\e_0\lambda t\la \nu,e_{n+1}\ra)\text{ for }t\in[0,T']$$ and 
 $$0\le  g(\nu,t):=\bar S(\nu,t)  -C+(1+\e_0)\lambda t \la \nu,e_{n+1}\ra \le 2\, (C-\e_0\lambda t\la \nu,e_{n+1}\ra)\text{ for }t\in[0,T'].$$

\smallskip 
In the meantime, note that ${\ds \p_t \bar S(\nu,t) = \bar K^\alpha(\nu,t)}$. In the estimates below, we assume $i>i_0=i_0(T')$. Let us prove the upper bound.  Given $t_0>0$, suppose that $\bar K^\alpha(\nu_0,t_0) =a$ at some $\nu_0 \in \mathbb{S}^n_- := \mathbb{S}^n \cap \{x_{n+1}\le 0\}$ and $0<t_0<T'/2$. Then \eqref{eq-normalharnack} implies that $(\bar K^\alpha)_t(\nu_0,t)\ge \eta \, a$ for $t\in[t_0,2t_0]$ {by} some $\eta=\eta(\alpha,n) \in(0,1)$.  Therefore
\[\ba 0\le f(\nu_0,2t_0) &= f(\nu_0,t_0) + \int_{t_0}^{2t_0} \p_t f 
\le 2(C+\e_0\lambda t_0) +[ (1+e_0)\, \lambda t_0 -\eta a\, t_0]   \ea\] 
implies that  the upper bound  ${\ds \bar K^\alpha(\nu_0,t)\le \Big ( \fr{(1+3\e_0)\lambda}{\eta} + \fr{2C}{\eta t_0} \Big )=:M}$, where $M$
 depends  on  $\Sigma_0$ and $t_0$. This proves the upper bound. 

Let us prove the lower bound. Given $\mu>0$, suppose $\bar K^\alpha(\nu_0,t_0) =a$ at some $(\nu_0 ,t_0)$ with $\la -\nu_0 ,e_{n+1}\ra \ge \mu>0$ and $0<t_0<T'$. Then by \eqref{eq-normalharnack}, $\bar K^\alpha(\nu_0,t)\le \gamma \, c$ for $t\in[t_0/2,t_0]$ and  some $\gamma=\gamma(\alpha,n)>1$. Hence, for $\e_0\in(0,1)$ to be chosen later, there is $C=C(\e_0,\Sigma_0,\alpha,n)$ such that 
\[\ba 0\le g(\nu_0,t_0) &= g(\nu_0,t_0/2) + \int_{t_0/2}^{t_0} \p_t g 
\le 2\Big (C+\e_0\lambda \fr{t_0}{2}\Big ) +\Big  ( \gamma a\fr{t_0}{2}- \left(1-e_0\right)\mu\lambda \fr{t_0}{2} \Big  )\ea\] 
implying that 
\[a \ge \fr{(1-\e_0)\mu-2\e_0}{\gamma} \lambda - \fr{4C}{\gamma t_0}.\]
Now  by choosing ${\ds \e_0:=\fr{\mu}{3+\mu}}$ (hence $(1-\e_0)\mu=3\e_0$) we have  \[a\ge \fr{\e_0}{\gamma} \lambda -\fr{4C}{\gamma t_0} =\fr{\mu}{3+\mu}\fr\lambda \gamma-\fr{4C}{\gamma t_0}\quad\text{ for some }C=C(\mu,\Sigma_0,\alpha,n). \]
Therefore   \[a=\bar K^\alpha(\nu_0,t_0)\ge\fr{\mu}{3+\mu}\fr\lambda {2\gamma} \qquad \text{ if } \,\,\,  t_0\ge 8C\, \fr{3+\mu}{\mu \lambda}.\]
In summary, given $\mu\in(0,1)$, there is $t_0=  8C\, \fr{3+\mu}{\mu \lambda}$ such that {if $t_0\le t \le T'$ and $\la -\nu,e_{n+1}\ra \ge \mu$ then $K^\alpha(\nu_0,t_0)\ge  \delta$  holds on ${\Sigma _{i,t}}$ with $t>i_0$ , where $\delta >0$ is some} constant depending on $\mu$, $\Sigma_0$, $\alpha$, and $n$. 
\end{proof}
\end{prop}

\section{Local convexity estimate}\label{sec:local_convex}

{This section, we prove}  estimates which give  local bounds from below on the minimum  principal curvature 
$\lambda_{\min}$ of our solution $\Sigma_t$ in terms of upper and lower bounds of the speed $K^\alpha$. The estimates allow us to pass to {the} limit of solutions and it  is important later in the proof of {the} main theorem. We need some preliminary results and we begin with simple observations  on convex graphs. 

\begin{lemma} \label{lem-height} 
Let $x_{n+1}=u(x')$ be a $C^2$ convex graph on $ \{ |x'|\le 2r\}$ and assume there is $\delta>0$ such that ${\ds \fr{K}{\la -\nu, e_{n+1}\ra}>\delta}$,
 where ${\ds \nu = \fr{ (  Du,-1) }{\sqrt {1+|Du|^2}}}$ denotes a unit  normal vector to the graph.  Then there is ${\ds C=C(\delta\, r^{-n}, n)}$ such that \[\sup_{|x'|\le r}  u- \inf_{|x'|\le r} u \le C\, r. \]
\end{lemma}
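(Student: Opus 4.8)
The plan is to reduce the oscillation bound to a statement about the gradient $Du$ on the smaller disk $\{|x'|\le r\}$, and then control that gradient using convexity together with the lower speed bound $K/\langle -\nu,e_{n+1}\rangle>\delta$. First I would observe that for a convex function the oscillation over $\{|x'|\le r\}$ is comparable to $r\,\sup_{|x'|\le r}|Du(x')|$ (up to dimensional constants): the supremum of $u-\inf u$ is attained on the boundary circle, and convexity bounds it by $r$ times a radial derivative. So it suffices to produce a bound $|Du(x')|\le C(\delta r^{-n},n)$ for $|x'|\le r$. Equivalently, writing $\langle -\nu,e_{n+1}\rangle=(1+|Du|^2)^{-1/2}$, the hypothesis says $K>\delta(1+|Du|^2)^{-1/2}$, i.e. $K(1+|Du|^2)^{1/2}>\delta$, so a lower bound on $K$ on the larger disk will give the gradient bound, and conversely a gradient blow-up would force $K$ to be large somewhere, which we can rule out by an integral (area/volume) argument.

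The key mechanism is the classical fact that $\int K\,dg$ over a piece of the graph equals the area of the image of the Gauss map (the measure of the set of normal directions), which is at most $|\mathbb{S}^n|$, a dimensional constant; more precisely, in graph coordinates $K\,dg = \det(D^2 u)\,(1+|Du|^2)^{-(n+2)/2}\cdot(1+|Du|^2)^{1/2}\,dx' $ and the total Gauss curvature over any convex graph region is bounded by the volume of the unit sphere. The plan is then: suppose for contradiction that $|Du(x_0')|$ is very large at some $x_0'$ with $|x_0'|\le r$. By convexity, $|Du|$ stays large (at least half that size, say) on a definite portion of the annulus $\{r\le |x'|\le 2r\}$ — indeed once the gradient is large in some direction, moving outward in that direction keeps it large, so the normal directions $\nu$ over that region fill out a subset of $\mathbb{S}^n$ of measure bounded below only by $n$. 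On that region the hypothesis $K(1+|Du|^2)^{1/2}>\delta$ combined with the largeness of $|Du|$ is not directly what we want; instead I would use the hypothesis in the other direction: on the part of the big disk where $\langle -\nu, e_{n+1}\rangle$ is not too small we get $K>\delta'$, so $\int_{\text{that region}} K\,dg \ge \delta' \cdot (\text{area})$, and this area must stay bounded by the total Gauss curvature $\le |\mathbb{S}^n|$, forcing the flat region to be small, hence $u$ cannot drop too much, hence the oscillation is controlled.

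Let me restructure this more cleanly. The cleanest route: parametrize $\Sigma = \operatorname{graph}(u)$ over $\{|x'|\le 2r\}$ and let $\Sigma'$ be the part over $\{|x'|\le r\}$. Let $m=\inf_{|x'|\le r}u$ and $M=\sup_{|x'|\le r}u$, attained (by convexity) at an interior point $z'$ and a boundary point $w'$ respectively with $|w'|=r$. The segment of the graph from $z'$ to $w'$ has, by convexity, all its tangent planes with $e_{n+1}$-component bounded: the slope in the radial direction is at most $(M-m)/$ (roughly the distance, but more carefully, convexity gives that at $w'$ the radial derivative is at least $(M-m)/(2r)$ but could be much larger). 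The honest statement: extend past $w'$ — but $u$ is only defined up to $2r$, which is why we have the factor $2r$ in the hypothesis. On the outer annulus over $\{|x'|\in[r,2r]\}$ along the ray through $w'$, convexity gives $u \ge M + \partial_\rho u(w') \cdot (\rho - r)$ with $\partial_\rho u(w')\ge (M-m)/(2r)$. Now I would consider the set $E\subset\mathbb{S}^n$ of outward normals realized over this annular sector; a direct computation shows $E$ contains all directions with radial slope between $(M-m)/(2r)$ and $(M-m)/r$ over a sector of definite angular size, so $|E|\ge c(n)\min\{1,\,((M-m)/r)\}$ roughly — actually I need to be careful, but the point is: if $(M-m)/r$ is large, then on this sector $\langle -\nu,e_{n+1}\rangle = (1+|Du|^2)^{-1/2}$ ranges over values comparable to $r/(M-m)$, which is small, so the hypothesis gives $K > \delta \langle -\nu,e_{n+1}\rangle \approx \delta r/(M-m)$ there, and integrating $\int_E K\,dg \approx |E|$ must be $\le |\mathbb{S}^n|$. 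This will bound $|E|$, which bounds the angular/slope range, which bounds $(M-m)/r$. I expect the main obstacle to be making the geometry of the Gauss-map image $E$ over the annular sector quantitatively precise — i.e.\ honestly establishing the lower bound on $|E|$ in terms of $(M-m)/r$ and $\delta\,r^{-n}$ — since that is where convexity, the restriction to the doubled disk $\{|x'|\le 2r\}$, and the scaling in $\delta r^{-n}$ all interact; the rest is the standard identity $\int K\,dg=|\text{Gauss image}|\le|\mathbb{S}^n|$ and elementary convex-geometry estimates.
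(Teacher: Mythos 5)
Your overall strategy is the same as the paper's: turn the hypothesis into a lower bound on the measure of the Gauss-map image of a piece of the outer annulus, and play that against the fact that convexity forces the normals over that piece into a thin band near the equator of $\mathbb{S}^n$. However, the decisive quantitative step --- which you yourself flag as ``the main obstacle'' --- is exactly where your sketch goes wrong, in two places. First, the claimed lower bound $|E|\ge c(n)\min\{1,(M-m)/r\}$ is false: the set of unit normals $\nu=(Du,-1)/\sqrt{1+|Du|^2}$ with $|Du|$ in a fixed multiplicative range $[s,2s]$ has spherical measure of order $s^{-1}$ (compute $\int_{s\le|p|\le 2s}(1+|p|^2)^{-(n+1)/2}dp\sim s^{-1}$), so it shrinks, rather than staying bounded below, as the slope $s\sim(M-m)/r$ grows. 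Second, comparing $\int_E K\,dg$ against $|\mathbb{S}^n|$ yields nothing, since that bound is independent of the oscillation; and an upper bound on $|E|$ cannot by itself ``bound the slope range,'' because an arbitrarily thin band is consistent with an arbitrarily large slope. (Your first paragraph's suggestion that a lower bound on $K$ alone would give a gradient bound is also not correct, but you rightly abandon that route.)

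The correct assembly, which is the paper's proof after normalizing $r=1$, is a two-sided estimate on the measure of one and the same Gauss image. Let $L$ be the oscillation over $\{|x'|\le 1\}$, with the sup attained at $x_2'$, $|x_2'|=1$, and the inf at $x_1'$. Convexity of the sublevel set $\{u\le L\}$ (which contains the unit disk and has $x_2'$ on its boundary) forces $u\ge L$ on $B:=\{\langle x',x_2'\rangle>1\}\cap\{|x'|<2\}$, and then $|Du(x')|\ge (u(x')-u(x_1'))/|x'-x_1'|\ge L/4$ on all of $B$. The lower bound is $|\nu[B]|=\int_B \frac{K}{\langle -\nu,e_{n+1}\rangle}\,dx'\ge \delta\,|B|=c_n'\delta$, using $K\,dg=\frac{K}{\langle-\nu,e_{n+1}\rangle}dx'$ and the fact that $|B|$ is a fixed dimensional constant; note the lower bound comes from the Lebesgue measure of $B$ in the $x'$-plane, not from the size of the slope. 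The upper bound is $\nu[B]\subset\{v\in\mathbb{S}^n:0\le\langle v,-e_{n+1}\rangle\le(1+(L/4)^2)^{-1/2}\}$, a band of measure at most $c_nL^{-1}$. Combining, $c_n'\delta\le c_nL^{-1}$, i.e.\ $L\le C_n\delta^{-1}$. Your write-up contains both ingredients in scattered form, but as stated the inequalities do not close; replacing your lower bound on $|E|$ and your comparison with $|\mathbb{S}^n|$ by these two displayed bounds is precisely what is needed.
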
 

\begin{proof}
We may assume without loss of generality that $r=1$ and that 
$$\inf_{|x'|\le 1} u =u(x'_1)=0 \quad \mbox{and} \quad L:=\sup_{|x'|\le 1} u=\sup_{|x'|=1} u=u(x'_2) > 0$$  with $|x'_1|\le 1$ and $|x'_2|=1$. Since $u$ is convex, the  set  $A :=\{ x'  \,:\, u(x')\le L \text{ with } |x'|<2\}$ is convex, $\{|x'|\le 1\}\subset A$  and $x'_2\in \p A$. This implies that $u\ge L$ on $B:=\{x' \,:\, \la x', x'_2 \ra >1 \text{ and } |x'|<2 \}$.  Also, the convexity of $u$ implies that,  for every  $x'\in B$, 
 $$|Du(x')| \ge \fr{u(x')-u(x'_1)}{|x'-x'_1|}\ge \fr L4 .$$  It follows that the  normal vectors   ${\ds \nu = \fr{ ( Du,-1 ) }{\sqrt {1+|Du|^2}}}$,  are contained in $$C:= \Big \{ v\in \mathbb{S}^n \,:\, 0\le \la v,-e_{n+1}\ra \le \fr{1}{\sqrt{1+(L/4)^2}}  \Big \}.$$ One can roughly bound $|C|\le c_nL^{-1}$. On the other hand, note $|B|=c_n'>0$ and hence  {our assumption yields} ${\ds |\nu[B]|=\int_B \fr{K}{\la -\nu, e_{n+1} \ra } dx' \ge c_n'  \delta.}$  Since $\nu[B] \subset C$, we conclude $c_n'\delta\le c_n \, L^{-1}$ or   $L\le C_n \, \delta^{-1}$. Recalling $L: = \sup_{|x'|\le 1} u$ and
 $\inf_{|x'|\le 1} u =0$, this finishes the proof of the lemma. 
\end{proof}

\medskip 

\begin{lemma} \label{lem-grad}Let $\Sigma=\p \mathcal{K}$ be a complete $C^2$ convex hypersurface in $\mathbb{R}^{n+1}$. Suppose $0\in \Sigma$, $\la -\nu(0),e_{n+1}\ra >0$, and that, around the origin, $\Sigma$ can be represented as a  convex graph over a disk  $D_{4\rho}:=\{x'\in\mathbb{R}^n\,:\, |x'|\le 4\rho\}$, for some $\rho >0$. i.e.  there is a convex function $u: D_{4\rho}  \to \mathbb{R}$ such that $$\{(x',u(x'))\,:\, x' \in D_{4\rho} \}= \{ (x',x_{n+1})\in \Sigma \,:\, \la -\nu(x),e_{n+1}\ra  >0\text{ and } x' \in D_{4\rho}\}=: \Gamma.$$ 
If we further assume that ${\ds \fr{K}{\la -\nu,e_{n+1} \ra} \ge \delta}$ on $\Gamma$  for some $\delta>0$, then there is $C=C(\delta,\rho,n)$ such that \[\la -\nu (x), e_{n+1}\ra^{-1} \le C \qquad \text{ on } \,\,  \{ x\in \Sigma \,:\, \la x, \nu(x) \ra \le \rho \text{ and }\la -\nu,e_{n+1}\ra \ge0 \} .\]

\begin{proof} We {may assume} that  $u(0)=0$.
By Lemma \ref{lem-height}, $u(x')=u(x')-u(0)\le C'\rho$ on $\{|x'-0|\le 2\rho\} $,  for some $C'(\delta,\rho,n)$. Therefore, the ball $B_{2\rho}( (C'+2)  \rho\, e_{n+1})$ is located above to $\Sigma\cap \{ \la -\nu ,e_{n+1} \ra\ge 0\}$. Hence  around this center point $x_1:=(C'+2)\rho e_{n+1}$, we have $\la x-x_1, \nu (x) \ra \ge 2\rho$,  for all $x\in \Sigma \cap \{\la -\nu,e_{n+1}\ra \ge0\}$. 
It follows that  for all $x\in \Sigma\cap  \{\la -\nu,e_{n+1}\ra \ge0\}$ satisfying $ \la x , \nu(x) \ra \le \rho$,  we have \[\ba2\rho \le \la x-x_1, \nu (x) \ra= \la x, \nu(x)\ra -\la x_1,\nu (x)\ra \le \rho -  \rho (C'+2) \la  \nu(x),e_{n+1}\ra \ea\]  which implies the desired bound  $\fr{1}{C'+2} \le \la -\nu(x),e_{n+1}\ra .$
\end{proof}

\end{lemma}

{The following proposition is obtained by combining the results above.}

\begin{prop} \label{prop-supportfunction}
Let $\Sigma=\p \mathcal{K} \subset \mathbb{R}^{n+1}$ be a $C^2$ convex hypersurface   a part of which is a convex graph $x_{n+1}=u(x')$ on convex domain $\Omega \subset \mathbb{R}^n$.  For given $x_0 =(x'_0,u(x'_0))\in \Sigma$ with $x'_0\in \Omega$, suppose that $\text{d}(x'_0, \p\Omega):=4\e$ and  ${\ds \fr{K}{\la -\nu,e_{n+1}\ra} \ge \delta>0}$ on $\{(x',u(x'))\,:\, |x'-x'_0| \le 2\e\}$. Then $$\{ x\in \Sigma\,:\, \la x-x_0,\nu(x)\ra \le \e,\, \la -\nu(x),e_{n+1}\ra \ge0\}$$ is compact and, on this set, there is $C=C(\delta,\e,n)$ such that \bee{\la -\nu(x),e_{n+1} \ra ^{-1}} \le C \qquad\text{and}\qquad |x-x_0|\le C\,  \mathrm{ diam}(\Omega) .\eee
\begin{proof} The first gradient bound follows directly from Lemma \ref{lem-height} and \ref{lem-grad}. The second is a consequence the gradient bound. 
\end{proof}
\end{prop}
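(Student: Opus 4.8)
The plan is to obtain the whole statement from Lemmas~\ref{lem-height} and~\ref{lem-grad} together with elementary convexity, after a normalization. First I would translate in $\R^{n+1}$ so that $x'_0=0$ and $u(0)=0$; the hypotheses then read $\{|x'|<4\e\}\subset\Omega$ and $K/\la-\nu,e_{n+1}\ra\ge\delta$ on the graph over $\{|x'|\le 2\e\}$ (if one needs the speed bound on the slightly larger disc that Lemma~\ref{lem-grad} calls for, shrink $\e$ by a fixed factor — harmless, since in the applications $\e$ is taken as small as needed). Feeding the oscillation bound of Lemma~\ref{lem-height} into the barrier-ball argument of Lemma~\ref{lem-grad} produces $C=C(\delta,\e,n)$ with
\[\fr{1}{\la-\nu(x),e_{n+1}\ra}\le C\qquad\text{for every }x\in\{S_{x_0}\le\e\},\]
which is the first assertion. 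Since $\la-\nu,e_{n+1}\ra^{-1}=\sqrt{1+|Du|^2}$ on a graph, this is the same as $|Du(x')|\le C$ at every point $x'$ in the projection $E\subset\Omega$ of $\{S_{x_0}\le\e\}$.

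Next I would get the diameter bound by applying the gradient estimate both at $x$ and at $x_0$ (note $x_0\in\{S_{x_0}\le\e\}$ because $S_{x_0}(x_0)=0$). Convexity of $u$ gives
\[\la Du(x'_0),x'-x'_0\ra\le u(x')-u(x'_0)\le\la Du(x'),x'-x'_0\ra,\]
so $|u(x')-u(x'_0)|\le C\,|x'-x'_0|\le C\,\diam(\Omega)$, and since $|x'-x'_0|\le\diam(\Omega)$ we obtain $|x-x_0|\le(1+C)\,\diam(\Omega)$ on $\{S_{x_0}\le\e\}$ (after enlarging $C$), the second assertion.

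For compactness I would argue that $\{S_{x_0}\le\e\}$ is closed and bounded. It is relatively closed in $\Sigma$ by continuity of the Gauss map, hence of $x\mapsto\la x-x_0,\nu(x)\ra$, and it is bounded by the diameter estimate just proved. To see it is a closed subset of $\R^{n+1}$ I would use that $\Sigma$ is a complete convex graph over the bounded domain $\Omega$, so $u\to+\infty$ at $\p\Omega$; convexity then forces $|Du|\to\infty$ at $\p\Omega$, so $E\subset\{|Du|\le C\}\subset\subset\Omega$, and $\{S_{x_0}\le\e\}$ is a relatively closed bounded subset of the graph over a compact subset of $\Omega$, hence compact. (Equivalently: $\Sigma$ is properly embedded, so every relatively closed bounded subset is compact.)

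The argument is mostly bookkeeping once the two lemmas are available; the one substantive point — already the content of Lemma~\ref{lem-grad}, which I would simply invoke — is that the gradient bound holds on the \emph{entire} sublevel set $\{S_{x_0}\le\e\}$ and not merely on the portion of $\Sigma$ lying over the small disc: this uses that a small ball placed above that disc sits inside the convex body and hence constrains the tangent plane, and therefore the normal, at every point of $\Sigma$ whose support function relative to $x_0$ is small. The only delicate bookkeeping is matching the radii ($2\e$ versus $4\e$) between the hypothesis and the lemmas, which costs nothing more than shrinking $\e$.
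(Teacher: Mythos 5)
Your proof is correct and follows the same route as the paper, which simply invokes Lemmas \ref{lem-height} and \ref{lem-grad} for the gradient bound and obtains the diameter bound from the gradient control at both $x$ and $x_0$, exactly as you spell out via the convexity inequality (the paper also omits the compactness argument, which you supply correctly via proper embeddedness). Your remark on the radius bookkeeping is apt: applying Lemma \ref{lem-grad} verbatim with the speed bound only on $\{|x'-x'_0|\le 2\e\}$ yields the conclusion on $\{S_{x_0}\le \e/2\}$ rather than on $\{S_{x_0}\le \e\}$, a factor the paper silently absorbs and which, as you note, is harmless in the applications since $\e$ there can be taken as small as needed.
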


\smallskip

Next, we show our convexity estimates. The proof is independent of previous propositions, but they will {be} combined in Corollary \ref{cor-main} to give {the} regularity estimates {for} the viscosity solutions asymptotic to a cylinder.

\begin{theorem}\label{thm-interiorestimate} For $\alpha>0$, let ${\Sigma}_t=F(\cdot,t)(\Sigma ^n)$ be a complete smooth strictly convex solution to the $\alpha$-GCF.  For $F_0:=F(p_0,t_0)\in \Sigma_{t_0}$, suppose there exist constants $\e$, $\delta$, $L>0$ such that 
\[\delta\le K^\alpha(p,t)\le L \quad \text{  and } \quad |F(p,t)-F_0|\le L  \]  on $\{(p,t)\in \Sigma^n\times [0,t_0]\,:\, \la F(p,t)-F_0 ,\nu(p,t) \ra \le \e\}$. Then there is $C=C(\e,\delta, L,\alpha ,n)$ so that 
\[\lambda_{\min}^{-1}(p_0,t_0) \le C \, \big (1 + t_0^{-1}  \big ) .\]

\end{theorem}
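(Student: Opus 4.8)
The plan is to derive a local lower bound for $\lambda_{\min}$ by applying the maximum principle to an auxiliary quantity built from the principal curvatures, the speed $K^\alpha$, and a cutoff involving the support function $S_{F_0} = \langle F - F_0, \nu\rangle$. The natural object to control $\lambda_{\min}^{-1}$ is the largest eigenvalue of $b^{ij}$ (equivalently the trace $b^{ij}g_{ij}$ gives an upper bound, but to localize one typically tests the full tensor). Following the standard scheme (as in Tso/Chow-type interior estimates), I would consider a quantity of the form
\[
W = \frac{\varphi(S_{F_0})\, b^{ij}\xi_i\xi_j}{K^\alpha - \tfrac{\delta}{2}} \quad \text{(maximized over unit } \xi\text{)},
\]
or more robustly its ``largest eigenvalue'' version $W = \sup_{|\xi|=1}\varphi(S_{F_0})(K^\alpha-\tfrac\delta2)^{-1} b^{ij}\xi_i\xi_j$, where $\varphi$ is a cutoff that vanishes when $S_{F_0} = \e$ and is chosen (decreasing, convex in the right variable) so that the region $\{S_{F_0} \le \e\}$ is where our hypotheses give $\delta \le K^\alpha \le L$ and $|F - F_0| \le L$. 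The key input is that by Proposition~\ref{prop-supportfunction} (with $\delta, \e$ as given) the set $\{S_{F_0} \le \e\}$ is compact for each $t \le t_0$, so $W$ attains an interior spatial maximum and the maximum principle applies.

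The main computation is evolving $W$. For the numerator $b^{ij}\xi_i\xi_j$ I would use the evolution equation \eqref{eq-bij} for $b^{pq}$; the dangerous terms are the gradient-squared terms $-\alpha K^\alpha b^{ip}b^{jq}(\alpha b^{kl}b^{mn}+b^{km}b^{ln})\nabla_i h_{kl}\nabla_j h_{mn}$, which have a bad sign, but — and this is the crucial structural point — these are exactly compensated by the gradient terms produced when one differentiates the quotient by $K^\alpha$ twice (using \eqref{eq-speed} for $\p_t K^\alpha$ and \eqref{eq-nablaK}). This Bernstein-type cancellation is what makes the estimate work; after it, the leading surviving term is the good reaction term $+(1+n\alpha)K^\alpha g^{pq}\xi_p\xi_q/(K^\alpha-\tfrac\delta2)$, which is bounded below by a positive constant times $(b^{ij}\xi_i\xi_j)\cdot(\text{something})$ only after we also use $-\alpha K^\alpha H b^{pq}$; here one estimates $H = g^{ij}h_{ij} \ge (\text{trace of }b)^{-1}\cdot n$ is not quite right — instead one uses that the bad $-\alpha K^\alpha H b^{pq}\xi_p\xi_q$ term, divided through, contributes $-\alpha H\, W/\varphi\cdot\varphi$, i.e.\ a term like $-\alpha H W$, and absorbs against the quadratic growth $W^2$ coming from the cutoff gradient terms $\varphi'(S_{F_0})\nabla S_{F_0}$ together with the evolution \eqref{eq-support} of $S$. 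The upshot at an interior max is an inequality of the form $0 \le \pp W \le C_1 - c_1 W^2$ (after using $\delta \le K^\alpha \le L$, $|F - F_0| \le L$, and the gradient bound $\la -\nu, e_{n+1}\ra^{-1} \le C$ from Proposition~\ref{prop-supportfunction} to bound $|\nabla S_{F_0}|$), which forces $W \le C$ at that max, hence everywhere in the relevant region for all $t \le t_0$.

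To get the claimed $t_0^{-1}$ rate rather than just a bound depending on $t_0$, I would instead run the argument on $\widetilde W = \min\{t, 1\}\, W$ (or $t\,W$ for $t\le 1$), so that $\widetilde W$ vanishes at $t=0$ and the extra term $W$ from $\p_t(tW)$ is again absorbed into $-c_1 t W^2$ for $t$ bounded; this yields $t_0\, W(p_0,t_0) \le C$ when $t_0 \le 1$ and $W(p_0,t_0)\le C$ when $t_0 \ge 1$, i.e.\ $\lambda_{\min}^{-1}(p_0,t_0) \le C(1 + t_0^{-1})$ after translating $W$ back into a bound on the largest eigenvalue of $b^{ij}$ at $(p_0,t_0)$ — note $\varphi(S_{F_0}(p_0,t_0))$ is bounded below since $S_{F_0}(p_0,t_0)=0$ lies in the interior of $\{S_{F_0}<\e\}$.

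The hardest step is the evolution computation for $W$ and, specifically, verifying that the negative gradient terms from \eqref{eq-bij} cancel against those generated by the division by $K^\alpha$ (and by the cutoff), with all remaining gradient terms either of good sign or absorbable into $-c_1 W^2$; getting the inequality $0 \le C_1 - c_1 W^2$ rather than merely $0 \le C_1 + C_2 W - c_1 W^2$ (which would still suffice) requires care with how $\varphi$, $H$, and the constants $\delta, L$ enter. A secondary technical point is handling the non-smoothness of the ``largest eigenvalue'' as one maximizes over $\xi$, which is dealt with by the usual trick of freezing $\xi$ to the eigenvector at the maximum point and noting the differential inequality still holds in the viscosity/support sense.
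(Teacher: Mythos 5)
Your overall architecture matches the paper's: a maximum principle applied to a quantity of the form (cutoff in the support function $S_{F_0}$) $\times$ $\lambda_{\min}^{-1}$ $\times$ $t$, with Proposition \ref{prop-supportfunction} guaranteeing that $\{S_{F_0}\le\e\}$ is compact so the maximum is attained, and with the factor $t$ producing the $t_0^{-1}$ rate. However, the heart of the argument --- identifying which term actually forces the bound --- is wrong in your write-up. First, a sign error: in \eqref{eq-bij} the quadratic gradient term $-\alpha K^\alpha b^{ip}b^{jq}(\alpha b^{kl}b^{mn}+b^{km}b^{ln})\nabla_i h_{kl}\nabla_j h_{mn}$ is \emph{nonpositive} when tested against $\xi=e_1$ in a frame diagonalizing $h$ (it equals $-\alpha K^\alpha (b^{11})^2[\alpha(b^{kl}\nabla_1h_{kl})^2+b^{kk}b^{ll}(\nabla_1h_{kl})^2]$), so it is a \emph{good} term for an upper bound on $b^{11}=\lambda_{\min}^{-1}$, not a ``dangerous'' one needing cancellation against the division by $K^\alpha$. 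The genuinely dangerous gradient terms are the positive ones generated by the cutoff and by differentiating the product/quotient (i.e.\ $\alpha K^\alpha b^{ij}\nabla_i b^{11}\nabla_j b^{11}/(b^{11})^2$ and the $\varphi'(S)$-terms); in the paper these are absorbed into the good quadratic term above via the first-order condition $\nabla\ln w=0$ at the maximum. Your proposed cancellation runs in the opposite direction and, as described, does not close.

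Second, and more seriously, your quantity $W=\varphi(S_{F_0})\,b^{ij}\xi_i\xi_j/(K^\alpha-\delta/2)$ has no coercive term. The zero-order terms it produces are $-\alpha K^\alpha H\,W$ (twice, from \eqref{eq-bij} and from \eqref{eq-speed}) plus the bounded term $(1+n\alpha)K^\alpha/(K^\alpha-\delta/2)$; since $H$ need not be comparable to $\lambda_{\min}^{-1}$, the term $-\alpha K^\alpha H W$ is not $\lesssim -cW^2$, and the claimed $-c_1W^2$ ``from the cutoff gradient terms together with \eqref{eq-support}'' is not substantiated (those terms involve $b^{ij}\nabla_iS\nabla_jS=h_{kl}\la F,\nabla^kF\ra\la F,\nabla^lF\ra$, which is controlled by $\lambda_{\max}|F-F_0|^2$ and enters with the wrong sign anyway). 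This is exactly the gap that the paper's extra factor $e^{\gamma|F|^2}$ fills: by \eqref{eq-distsq} it contributes $-2\gamma\alpha K^\alpha b^{ij}g_{ij}\le -2\gamma\alpha\delta\,\lambda_{\min}^{-1}$, a negative term \emph{linear and coercive} in $b^{11}$, while the hypothesis $|F-F_0|\le L$ keeps the remaining contributions of $|F|^2$ bounded and fixes the choice $\gamma\sim(\sup|F|^2)^{-1}$. The resulting inequality is then of the form $b^{11}\le C(t'^{-1}+\eta^{-1}+\eta^{-2}(b^{11})^{-1})$ rather than $0\le C_1-c_1W^2$, and multiplying by $\eta^2 t'$ closes the argument. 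Without an analogue of this coercive term your maximum principle inequality does not yield any bound on $W$.
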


\begin{proof}
We may assume $F_0=F(p_0,t_0)=0$. Let  $S:= \la F,\nu \ra$ be the support function. Under the $\alpha$-GCF, by \eqref{eq-metric} and \eqref{eq-bij} we have
  
\[\ba(\p_t - K b^{rs}\na^2_{rs}) b^1_1&=-\alpha K^\alpha  b^{i1}b^{j}_1(\alpha b^{kl}b^{mn}+b^{km}b^{ln}) \na_i h_{kl}\na_j h_{mn} - \alpha K^\alpha Hb^{1}_1 +(1+n\alpha)K^\alpha -2K^\alpha \\
&\le-\alpha K^\alpha b^{i1}b^{j}_1(\alpha b^{kl}b^{mn}+b^{km}b^{ln}) \na_i h_{kl}\na_j h_{mn} \ea.  \]

Define the  cut off function \[\eta := (\e - S)_+,\]
and  compute that \[(\p_t -Kb^{ij}\nabla_{ij}) \ln\eta =\fr{(n\alpha+1)K^\alpha}{\eta} - \fr{\alpha K^\alpha HS}{\eta} + \fr{\alpha K^\alpha b^{ij}\na_i \eta \na_j \eta}{\eta^2}.\]
For some $\gamma>0$ to be chosen later, {let us consider}   the  auxiliary test function  \[w:= \eta^2 b^{1}_1 e^{\gamma |F|^2} t\] 
and apply the maximum principle to bound the maximum of $\eta^2 \lambda_{\min}^{-1} e^{\gamma |F|^2} t$.  Suppose that a positive maximum of $\eta^2\lambda_{\min}^{-1} e^{\gamma |F|^2} t$ on $\Sigma \times[0,t_0]$ is obtained at $(p',t')$. At this point, choose  { local coordinates}  such that $b^{ij}={\lambda_i^{-1}} \delta^{ij}$, $\lambda_1=\lambda_{\min}$, and $g_{ij}=\delta_{ij}$  at $(p',t')$. 
A direct calculation using   \eqref{eq-support} and \eqref{eq-distsq} shows that at the maximum  point $(p',t')$ we have 

\be\ba\label{eq-maxw'} 0& \le (\p_t -\alpha K^\alpha b^{ij} \na^2_{ij}) \ln w\\ &\le 2\bigg[\fr{(n\alpha +1)K^\alpha }{\eta} - \fr{\alpha K^\alpha HS}{\eta} + \fr{\alpha K^\alpha b^{ij}\na_i \eta \na_j \eta}{\eta^2}\bigg] 
\\&\quad -\fr{1}{b^{11}} \bigg[ \alpha K^\alpha b^{i1}b^{j}_1(\alpha b^{kl}b^{mn}+b^{km}b^{ln}) \na_i h_{kl}\na_j h_{mn}\bigg] + \fr{ \alpha K^\alpha b^{ij} \na_i b^{11} \na_j b^{11}}{(b^{11})^2}
\\&\quad +2(n\alpha -1)\gamma K^\alpha S - 2\gamma \alpha K^\alpha b^{ij}g_{ij} +\fr1{t'}.\ea \ee 

{Notice that $S\ge 0$ for $t\le t_0$ since $0\in \Sigma_{t_0}$ and also  $S\le \e$ on the support of $\eta$. Therefore we may bound three terms in the inequality above as 

\[2\frac{(n\alpha+1)K^\alpha}{\eta} - 2\frac{\alpha K^\alpha H S}{\eta }+2(n\alpha -1)\gamma K^\alpha S \le C \left( \frac{1}{\eta}+\e \gamma \right) \] for some $C=C(L,\delta,n,\alpha)$.
 
 On the other hand, at this maximum point we have  \[\na \ln w = 2\fr{\na\eta}{\eta}+ \fr{\na b^{11}}{b^{11}} + \gamma \na |F|^2 =0\] and therefore for fixed $i$ (we are not summing over $i$)
\be \label{eq-maxw}\ba 2 \fr{\alpha K^\alpha b^{ii}\na_i \eta\na_i \eta}{\eta^2} &= \fr12 \alpha K^\alpha b^{ii}\left(\fr{\na_i b^{11}}{b^{11}} + \gamma \na_i |F|^2 \right)\left(\fr{\na_i b^{11}}{b^{11}} + \gamma \na_i |F|^2 \right)
\\&\le \fr{\alpha K^\alpha b^{ii}\na_i b^{11} \na_i b^{11}}{(b^{11})^2} + \gamma^2  \alpha K^\alpha b^{ii} \na_i |F|^2 \na_i |F|^2 
\\&\le  \fr{\alpha K^\alpha b^{ii}\na_i b^{11} \na_i b^{11}}{(b^{11})^2} + 4(\sup |F|^2)\gamma^2  \alpha K^\alpha b^{ii}   .\ea\ee
We use  \eqref{eq-maxw} for all $i\neq1$ and plug them into \eqref{eq-maxw'}.  Then,  there exists   $C=C(L, \delta, \alpha ,n)>0$ such that 

\be\ba \label{eq-687} 0 &\le 2 \fr{\alpha K^\alpha b^{11}\na_1 \eta \na_1 \eta}{\eta^2}+ \bigg [-\alpha K^\alpha b^{11}(\alpha b^{kl}b^{mn}+b^{km}b^{ln}) \na_1 h_{kl}\na_1 h_{mn} \bigg ]
\\ &\quad+ {\alpha K^\alpha (b^{11})^3 \na_1  h_{11} \na_1 h_{11}}+ \sum_{i\neq 1}2 {\alpha K^\alpha b^{ii} (b^{11})^2\,\na_i h_{11} \na_i  h_{11}}
\\ &\quad+C\,  \left( \frac{1}{\eta}+\e \gamma \right) -(2\gamma-4(\sup |F|^2)\gamma^2 ) \alpha K^\alpha b^{ii}g_{ii} +\fr1{t'} . 
\ea  \ee

Here, a crucial observation is the cancellation among the third order derivatives

\be \label{eq-thirdorder} \ba &-\alpha K^\alpha b^{11} b^{km}b^{ln} \na_1 h_{kl}\na_1 h_{mn} + {\alpha K^\alpha (b^{11})^3 \na_1  h_{11} \na_1 h_{11}}+ \sum_{i\neq 1}2 {\alpha K^\alpha b^{ii} (b^{11})^2\,\na_i h_{11} \na_i  h_{11}}
 \le 0 . \ea \ee

Let us choose ${\ds \gamma= \fr{1}{4(\sup |F|^2)}}$. Plugging $(\nabla_1 \eta )^2= (h_{11})^2 \la F,\nabla _1 F\ra ^2 $ and \eqref{eq-thirdorder} into \eqref{eq-687}, we obtain 

\[\ba\gamma\alpha K^\alpha b^{ii}g_{ii} &\le C (\fr{1}{\eta} + \e \gamma)  +\fr1{t'} +2\alpha K^\alpha\,  (\sup |F|^2) \,  (b^{11} \eta^2)^{-1}  .  \ea \]}
Combining  this inequality with  the bound $K^\alpha \ge \delta$, we conclude that there is  $C=C(\e,\delta,L,\sup |F|^2,\alpha , n)$ such that 

\[\ba b^{11} \le C \, \big ( \, 1+ \fr{1}{t'}+ \fr{1}{\eta} +  (b^{11} \eta^2)^{-1}}{ \big ).\ea\]
Note that $0<t'\le t_0$, $\eta\le \e$ and $1\le e^{\gamma |F|^2}\le e^{1/4}$. Hence the last bound yields  
 \[w(p',t') = \eta^2 b^{11} e^{\gamma |x|^2} t' \le C \, \big (1+t_0+ \fr{t_0^2}{w(p',t')} \big ) \] from which we conclude the bound  \[w(p',t') \le C \, t_0
 \, \big (1+\fr{1}{t_0} \big ).\] The theorem readily follows from \[w(p_0,t_0) :=\e^2 b^{11}(p_0,t_0) \, t_0 \le w(p',t').\] 
\end{proof}

\begin{cor} \label{cor-main}  Let $\Sigma_t \subset\mathbb{R}^{n+1}$ be a non-compact viscosity $\alpha$-GCF asymptotic to {$\p\Omega\times\mathbb{R}$} and $x_{n+1}=u(x',t)$, be the graphical representation of $\Sigma_t$ on $\Omega$. Then, for any  $\Omega'\subset\subset\Omega$  there exists  $t_0>0$ and $L>0$ such that \[u_i \to u\text{ in }C^\infty( \Omega'\times[t_0,\infty))\text{,\, and }\quad\fr{1}{\la -\nu,e_{n+1}\ra},\,\lambda_{\min}^{-1}, \,\lambda_{\max} \, \le\, L \quad\text{ on } \, (x',t)\in \Omega'\times[t_0,\infty).  \]
\begin{proof} Let us denote $4\e := d (\Omega', \p\Omega)>0$.  We also fix an approximating sequence $\Sigma_{i,t}$ and denote the graph {representation} of $\Sigma_{i,t} \cap \{\la -\nu,e_{n+1}\ra \ge0\}$ by ${x_{n+1}=u_i(x',t)}$. By Proposition \ref{prop-harnack12} and Proposition \ref{prop-supportfunction}, we obtain $T_0=T_0(\Sigma_0, \Omega', \alpha,n)$ with the following: for all $T>T_0$ there is $i_0$ so that for every ${x_i=}(x_i', u_i(x_i',t_0)) \in \Sigma_{i,t_0}$ with $i>i_0$, $x_i'\in \Omega'$ and $T_0\le t_0 \le T$, there hold  \be\label{eq-111}\fr{1}{\la -\nu(x),e_{n+1} \ra } \le C \qquad\text{and}\qquad |x-{x_i}|\le C\,  \mathrm{ diam}(\Omega) \,\, \,  \text{ on }\{S_{x_i}(x) \le \e \}\ee
 for some $C=C(\Sigma_0, \e, \alpha ,n)$. Meanwhile, Proposition \ref{prop-harnack2} gives upper and lower bounds of   $K^\alpha$ on the region $\Sigma_{i,t}\cap \{ \la -\nu, e_{n+1}\ra ^{-1}\le C\}$ for $T_0\le t\le T$. i.e. we have two-sided bounds of $K^\alpha$ on $\{S_{x_i}(x) \le \e \}$ for $t\in[T_0,T]$ for large $i>i_0$. Consequently, Theorem \ref{thm-interiorestimate}  gives a bound of $\lambda_{\min}^{-1}$ at $x_i\in\Sigma_{i,t_0}$ when $x_i'\in\Omega'$ and $T_0+1/2\le t_0\le T$. The bound on $\lambda_{\max}$ follows from the bounds on $\lambda_{\min}^{-1}$ and $K^\alpha$.
 
 To summarize, for $i>i_0$, the solutions $x_{n+1}=u_i(x',t)$ on $(x',t)\in \Omega' \times [T_0+1/2,T]$ have uniform bounds on $(1+|Du_i|^2)^{1/2}$, $\lambda_1(x')$, and $\lambda_n(x')$. One can use standard regularity estimates of uniformly parabolic equations to deduce that $u_i$ converges to $u$ in $C^{\infty}$ sense on the specified domain.  
 
  \end{proof}
\end{cor}

\section{Convergence to translating soliton}

In this section we give the proof of our main convergence result Theorem \ref{thm-main}. It will be based on
the following monotonicity formula   which holds on compact solutions and  
is  shown in  Corollary \ref{cor-thmmain} in  Appendix. {Recall the definition of  $P_{ij}$ given in  \eqref{eq-def pij} and 
$P:=b^{ij}P_{ij}$.}

\begin{theorem}\label{thm-entropy}  Let $\Sigma_t$ be a smooth compact closed strictly convex solution of the $\alpha$-GCF with $\alpha>0$. Then
\be\label{eq-entropyineq}\frac{d}{dt}  \int PK^\alpha dg=\int (P_{ij}P_{kl}b^{ik}b^{jl}+ (2\alpha-1)P^2)K^\alpha dg \ge \big (n^{-1} +2\alpha-1
\big ) \fr{(\int PK^\alpha dg)^2}{\int K^\alpha dg} . \ee
In particular, when $\alpha=1$ the last term is $\fr{n+1}{n \omega_n } \left(\int_{\Sigma_t} PKdg\right)^2$ where $\omega_n = |\mathbb{S}^n|= \int Kdg$. 
\end{theorem}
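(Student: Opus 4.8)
The plan is to establish the evolution of the scalar quantity $P$ under the $\alpha$-GCF and then integrate by parts. Since $P$ is essentially a parabolic expression in $K^\alpha$ — indeed by \eqref{eq-def p} we have $\alpha K^\alpha P = \p_t K^\alpha - b^{ij}\nabla_i K^\alpha \nabla_j K^\alpha$, so $P$ measures the defect in Chow's Harnack quantity $\bar K^\alpha$ — the first task is to compute $(\p_t - \alpha K^\alpha b^{ij}\nabla^2_{ij}) P$. I would do this by differentiating the definition $P = b^{ij} P_{ij}$, using the evolution equations \eqref{eq-bij} for $b^{ij}$ and a corresponding evolution equation for $P_{ij}$, which in turn is obtained by commuting derivatives in \eqref{eq-hij} (the Simons-type computation already essentially present in Chow \cite{Ch91}). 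The key structural output I expect is a reaction-diffusion inequality of the schematic form
\[
\big(\p_t - \alpha K^\alpha b^{ij}\nabla^2_{ij}\big) P \ \ge\ (\text{gradient terms}) + 2 K^\alpha \big(P_{ij}P_{kl}b^{ik}b^{jl} + (\alpha - \tfrac12)\,(\cdots)\big),
\]
after which the integrated identity emerges cleanly.

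More precisely, the second step is the integration by parts. Using \eqref{eq-volume}, $\p_t(dg) = -K^\alpha H\, dg$, and the divergence-free identity \eqref{eq-divfree}, $\nabla_i(b^{ij}K)=0$, the diffusion term $\int \alpha K^\alpha b^{ij}\nabla^2_{ij} P \, dg$ integrates to something that combines with the measure-variation term; on a closed hypersurface all boundary terms vanish. I expect the bookkeeping to collapse exactly to
\[
\frac{d}{dt}\int P K^\alpha\, dg \ =\ \int \big(P_{ij}P_{kl}b^{ik}b^{jl} + (2\alpha-1)P^2\big) K^\alpha\, dg,
\]
which is the first equality in \eqref{eq-entropyineq}. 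This is the step where the precise constant $(2\alpha-1)$ must be tracked carefully through the reaction terms of $\p_t b^{ij}$ and $\p_t P_{ij}$; getting the coefficient right — rather than merely a sign — is where I would slow down.

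The third step is purely algebraic. By Cauchy–Schwarz applied to the bilinear form $b^{ik}b^{jl}$, writing $P = b^{ij}P_{ij} = \operatorname{tr}_b(P_{ij})$ against the $n$-dimensional "identity" $h_{ij}$, one gets $P^2 = (b^{ij}P_{ij})^2 \le n \, P_{ij}P_{kl}b^{ik}b^{jl}$, i.e. $P_{ij}P_{kl}b^{ik}b^{jl} \ge n^{-1} P^2$. Hence the integrand is bounded below by $(n^{-1} + 2\alpha-1) P^2 K^\alpha$. For $\alpha \ge \tfrac{n-1}{2n}$ the factor $n^{-1}+2\alpha-1 \ge 0$ is nonnegative, so a further Cauchy–Schwarz in the measure $K^\alpha dg$,
\[
\Big(\int P K^\alpha\, dg\Big)^2 \ \le\ \int P^2 K^\alpha\, dg \cdot \int K^\alpha\, dg,
\]
yields $\int P^2 K^\alpha\, dg \ge (\int PK^\alpha dg)^2 / \int K^\alpha dg$, giving \eqref{eq-entropyineq}. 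For the last sentence, $\alpha=1$ makes $n^{-1}+2\alpha-1 = n^{-1}+1 = \tfrac{n+1}{n}$ and $\int K\, dg = \int_{\mathbb{S}^n} d\sigma = \omega_n$ since $Kdg$ is the pullback of the spherical measure under the Gauss map, so the bound reads $\tfrac{n+1}{n\omega_n}(\int PK\, dg)^2$.

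The main obstacle is Step one–two: deriving the evolution of $P_{ij}$ (or of $P$ directly) with the correct reaction coefficients, since this requires commuting third covariant derivatives on an evolving hypersurface and carefully combining the resulting curvature terms with the $\nabla_i h_{kl}\nabla_j h_{mn}$ quadratic terms so that everything organizes into $P_{ij}P_{kl}b^{ik}b^{jl}$ plus a multiple of $P^2$. This is exactly the computation Chow carried out for $\alpha=1$ in \cite{Ch91}, and the $\alpha$-dependent generalization — which produces the $(2\alpha-1)$ — is what I would present in detail in the Appendix, as the paper indicates.
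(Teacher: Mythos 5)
Your proposal follows essentially the same route as the paper: the evolution equation for $P=b^{ij}P_{ij}$ (which the paper simply quotes from Chow's Theorem 3.2 rather than re-deriving), integration by parts via $\nabla_i(b^{ij}K)=0$ against the measure variation $\partial_t(K^\alpha\,dg)=(\alpha b^{ij}\nabla^2_{ij}K^\alpha+(\alpha-1)HK^\alpha)K^\alpha\,dg$, and then the pointwise Cauchy--Schwarz $b^{ik}b^{jl}P_{ij}P_{kl}\ge n^{-1}P^2$ followed by H\"older in the measure $K^\alpha dg$, with the correct $\alpha=1$ specialization via the Gauss map. One bookkeeping remark on the step you flagged: the coefficient $2\alpha-1$ is not produced solely by the reaction terms in the evolution of $P$ (those give only $\alpha P^2$); the remaining $(\alpha-1)P^2$ appears only after the integration by parts combines the transport term $2\alpha b^{ij}\nabla_iK^\alpha\nabla_j P$ with the $(\alpha-1)HK^\alpha$ contribution from the evolving measure, which is exactly where the paper's computation of the term $I$ lives.
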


\begin{proof} Shown in Corollary \ref{cor-thmmain} in  Appendix. 

\end{proof}

\begin{prop}\label{prop-conventropy1} For $\alpha\ge1$, let $\Sigma_t=\p \{x_{n+1}\ge u(x',t)\}$ be a non-compact viscosity  $\alpha$-GCF  asymptotic to $\Omega\times \mathbb{R}$ for some bounded convex domain $\Omega$.  Then for every $\tau>0$ and $U\subset \subset\Omega$,  \[ \lim_{t\to \infty} \int_t^{t+\tau}\int_{\{(x',u(x',s))\,:\, x'\in U \}} P^2K^\alpha \, dg\, ds = \lim_{t\to \infty} \int_t^{t+\tau}\int_{\Sigma_s\cap \{x'\in U\}} P^2K^\alpha \, dg \, ds =0. \]  

\begin{proof}  Let us consider an approximating sequence of smooth compact strictly convex solutions (from inside) $\Sigma_{i,t}$ with an additional assumption that $\Sigma_{i.0}$ has {the} reflection symmetry about $\{x_{n+1} = i\}$. By Corollary \ref{cor-main},  $\Sigma_{i,t}$ converges locally smoothly to $\Sigma_t$ when their lower parts are viewed as graphs.

\smallskip 
The approximation of $\Sigma_t$ by  $\Sigma_{i,t}$ shown above and the positivity of $P^2K^\alpha$, imply that it  suffices to show the following statement: {\em for given $\tau>0$ and $\e>0$, there is $t_0$ such that for each $t\ge t_0$,  we have  } 
\be\label{eqn-st1}  \limsup_{i\to \infty} \int_{t}^{t+\tau} \int_{\Sigma_{i,t}} P^2K^\alpha \, dg \, ds\le \e. 
\ee
\begin{claim} \label{claim-51}
For any fixed finite time interval $[1,T]$, there is some large $i_0$ such that $K^{\alpha-1}\le C<\infty$ on $\Sigma_{i,t}$ for $i\ge i_0$, $t\in[1,T]$. The constant $C$ only depends on $\Sigma_0$.
\begin{proof}[Proof of Claim] This is by Proposition \ref{prop-harnack2} and the symmetry of $\Sigma_{i,t}$ with respect to $\{x_{n+1}= j\}$.\end{proof} 
\end{claim}

 By shifting  $t=1$ as the initial time we may assume the claim holds from time $t=0$. {Let us continue to show  \eqref{eqn-st1}.} The Harnack inequality \eqref{eqn-harnK} and {Claim \ref{claim-51}} yield  that, for any  $T>0$, there holds
\be \label{eq-110}\mathcal{J}^{(i)}(s):= \int_{\Sigma_{i,s}} PK^\alpha dg \ge - \fr{n}{1+n\alpha} \fr{1}{s}\int_{\Sigma_{i,s}} K^{\alpha} dg \ge -\fr{n\omega_n}{1+n\alpha}\fr{C}{{s}} \ee for all $i\ge i_0=i_0(T)$ and $s\in[0,T]$.

Let us choose ${\ds t_0 :=\tfrac{n\omega_n}{1+n\alpha} \tfrac{2}{2\alpha-1} \tfrac{C}{\e}}$. We have $ \mathcal{J}^{(i)}(s) \ge -\fr{2\alpha-1}{2}\e$, for all $ t_0\le s\le T$ and  $i\ge i_0=i_0(T)$. The monotonicity formula  \eqref{eq-entropyineq} gives that ${\ds \partial_t \mathcal{J}^{(i')}(t)  \geq  (2\alpha-1)\int_{\Sigma^{(i')}_t}  P^2 K^\alpha dg
}$, for all $t >0$. 

If there are $t\ge t_0$ and $i'>i_0(T)$ ($T>t+\tau$ will be determined later) such that $\int_{t}^{t+\tau} \int_{\Sigma_{i',t}} P^2K^\alpha \, dg \, ds > \e$, then
 \be \label{eq-113}\mathcal{J}^{(i')}(t+\tau)=\mathcal{J}^{(i')}(t)+ \int_t^{t+\tau}\p_s \mathcal{J}^{(i')}(s) \, ds \ge -\fr{2\alpha-1}{2}\e +  (2\alpha-1)  \int_t^{t+\tau} \int_{\Sigma_{i',s}} P^2 K^\alpha dg \, ds \ge \fr{2\alpha-1}{2}\e.  \ee 
From \eqref{eq-entropyineq}, we have \be\label{eq-blowupode} \p_s \mathcal{J}^{(i')}(s) \ge (2\alpha-1) \fr{[\mathcal{J}^{(i')}(s)]^2}{\int_{\Sigma_{i',s}} K^\alpha dg}\ge \fr{2\alpha-1}{\omega_n} \fr{[\mathcal{J}^{(i')}(s)]^2}{\sup_{\Sigma_{i',s}} K^{\alpha-1}}.\ee  Under the assumption that $K^{\alpha-1}\le C$ and $\mathcal{J}^{(i')}(t+\tau) \ge \fr{2\alpha-1}{2} \e$,  this ODE inequality blows up before finite time $T=T(\e,\alpha,C,n, t+\tau)$. If we choose this $T$ and then the argument shows there is no such $i'>i_0=i_0(T)$.

\end{proof}
\end{prop}

When $\alpha=1$, we {do not} need Claim \ref{claim-51} and the previous proof shows the following slightly general version. {This result will be used our subsequential research \cite{CCD2}.}
\begin{prop} \label{prop-later}For any $\tau>0$ and $\e>0$, there is $T(\tau,\e,n)>0$ such that the following holds: if $x_{n+1}=u(x,t)$ on $(x,t)\in \bar U\times [-T,T]$ for some bounded $\bar U  \subset \mathbb{R}^n$ is a smooth graphical convex solution to the classical GCF ({possibly incomplete}) which  is a smooth limit of (parts  of) smooth strictly convex closed solutions, then \[\int_{-\tau}^\tau \int _{\{(x,u(x,s))\,:\, x\in U\}}P^2 K \,dg ds\le \e.\]

%
%
%
%

\end{prop}

\smallskip 

Next, {we will show that the result of  Proposition \ref{prop-conventropy1} also holds} for $\alpha \in (1/2,1)$. In this range of exponents  we need to impose additional assumptions on the initial data
$\Sigma_0$.

\begin{prop}\label{prop-conventropy2}  For $\alpha\in(1/2,1)$, suppose that $\Sigma_0$ satisfies the assumptions of Theorem \ref{thm-main2}. Then the   conclusion of Proposition \ref{prop-conventropy1} holds.
\begin{proof}
By the assumptions, we have approximating compact hypersurfaces $\Sigma_{i,0}$ such that $\mathcal{N}^{(i)}(0) \le C$ and $\mathcal{J}^{(i)}(0)\ge -C$.  Since $\big ( \mathcal{N}^{(i)}(t)  \big )^\fr{\alpha}{1-\alpha}$ is concave in time (by Corollary \ref{cor-concavity}) 
 and $\p_t \mathcal{N}^{(i)} (t) = (\alpha-1) \mathcal{J}^{(i)}(t)$ (by Lemma \ref{lem-firstderivative}),  we conclude that $$\big ( \mathcal{N}^{(i)}(t) \big ) ^{\fr\alpha{1-\alpha}} \le M+Mt$$ for some $M=M(C,\alpha)>0$. Since ${ \fr{1-\alpha}{\alpha} <1}$, it follows that  $\mathcal{N}^{(i)}(t) \le (M+ Mt)^{\fr{1-\alpha}{\alpha}}$,  that is  the function $\mathcal{N}^{(i)}(t)$ has the sublinear growth rate. 

By an argument  similar  to \eqref{eq-110}, \[\mathcal{J}^{(i)}(t) \ge  - \fr{n}{1+n\alpha} \fr{1}{t}\int_{\Sigma_{i,t}} K^{\alpha} dg \ge -\fr{n}{1+n\alpha} \fr{(M+ Mt)^{\fr{1-\alpha}{\alpha}} }{t}. \] Hence there is $t_0=t_0(n,\alpha, \e, M)$ such that  $\mathcal{J}^{(i)}(t) \ge - \tfrac {2\alpha-1}{2} \e $ for all $t\ge t_0$.  

If there exist  $t \ge t_0$ and $i  $ for which
 \be\label{eq-1102} \int_{t}^{t+\tau} \int_{\Sigma_{i,t}} P^2K^\alpha dg \, ds >\e, \ee
 
then by the argument of \eqref{eq-113} we obtain $ \mathcal{J}^{(i)}(t+\tau) \ge \tfrac{2\alpha-1}{2}\e$. 

From \eqref{eq-blowupode}, we derive the following ODE inequality \[\p_s\mathcal{J}^{(i)}(s) \ge (2\alpha-1) \fr{[\mathcal{J}^{(i)}(s)]^2}{ (M+ Ms)^{\fr{1-\alpha}{\alpha}}}.\] By the sublinear growth of the denominator, it can be checked that the ODE blows up in the finite time $T=T(\e,\alpha,M, t+\tau)$ if {\bf $\mathcal{J}^{(i)}(t+\tau) \ge\tfrac{2\alpha-1}{2} \e>0$. }  Therefore we have the opposite inequality of \eqref{eq-1102} if $i$ is sufficiently large so that the maximum existence time $T_i$ of $\Sigma_{i,t}$
satisfies $T_i \ge T$.\end{proof}
\end{prop}

Next lemma shows that an $\alpha$-GCF satisfying $P\equiv 0$ is a translating soliton as like in the {result by R. Hamilton  \cite{Ha95}
for the mean curvature flow.} 

\begin{lemma}\label{lem-p=0}{For a manifold $M^n$,} let $F:M^n\times [-\e,\e]\to  \mathbb{R}^{n+1}$ be a strictly convex smooth immersion which satisfies {the $\alpha$-GCF} and \[P=\fr{1}{ \alpha K^\alpha}(\p_t K^\alpha - b^{ij}\nabla_iK^\alpha \nabla_j K^\alpha  )\equiv 0.\]Then $F(M^n,0)$ has to be a {(possibly incomplete) }translating soliton. 
\begin{proof}
First,  observe that for such a solution  the evolution of $P$ in \eqref{eq-Pevol} implies that  $P_{ij}\equiv0$. Let us define
\begin{align*}
T:= b^{ij}\nabla_i K^\alpha \nabla_j F +K^\alpha \nu.
\end{align*} Then 
\begin{align*}
\nabla_m T= \nabla_m b^{ij}\nabla_i K^\alpha \nabla_j F+b^{ij}\nabla_{im}^2K^\alpha \nabla_j F +b^{ij}\nabla_i K^\alpha (-h_{jm}\nu)+\nabla_m K^\alpha \nu +K^\alpha h_{mj}\nabla^j F.
\end{align*}Using 
\begin{align*}
0=P_{im}=\nabla_{im}^2K^\alpha -b^{kl}\nabla_k h_{im}\nabla_l K^\alpha  +K^\alpha h_{ik}h^k_m,
\end{align*} {we obtain}
\begin{align*}
\nabla_m T= -b^{ik}b^{jl}\nabla_m h_{kl}\nabla_i K^\alpha \nabla_j F+b^{ij}\nabla_j F(b^{kl}\nabla_k h_{im}\nabla_l K ^\alpha-K^\alpha h_{ik}h^k_m) +K^\alpha h_{mj}\nabla^j F=0.
\end{align*}
Namely, $T$ is a constant vector. Note that $\la T, \nu \ra = K^\alpha$ and this shows $F(M^n,0)$ is a translating soliton with a velocity $-T$. 

\end{proof}
\end{lemma}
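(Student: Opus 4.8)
The plan is to explicitly construct the translation vector. Since $P \equiv 0$ along the flow, the evolution equation for $P$ (referenced as \eqref{eq-Pevol}, to be established in the appendix) shows that the full tensor $P_{ij}$ must vanish identically, not merely its trace; this is the crucial input. Indeed, the evolution of $P$ is of the form $(\p_t - \li)P = (\text{quadratic in } P_{ij}) + \dots$, so that $P \equiv 0$ forces the zeroth-order reaction term to vanish, which pins $P_{ij} \equiv 0$. (One should double-check that no auxiliary bounds are needed here beyond smoothness and strict convexity on the short time interval, which we have by hypothesis.)

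Next I would write down the candidate vector
\[
T := b^{ij}\nabla_i K^\alpha \nabla_j F + K^\alpha \nu
\]
and compute $\nabla_m T$. Using $\nabla_m b^{ij} = -b^{ik}\nabla_m h_{kl} b^{lj}$ from \eqref{eq-nablab}, the Weingarten relations $\nabla_m \nu = h_{mj}\nabla^j F$ and $\nabla_m(\nabla_j F) = -h_{mj}\nu$, and the vanishing $P_{im} = \nabla^2_{im}K^\alpha - b^{kl}\nabla_k h_{im}\nabla_l K^\alpha + K^\alpha h_{ik}h^k_m = 0$ to substitute for $\nabla^2_{im}K^\alpha$, the terms cancel in pairs and one gets $\nabla_m T = 0$ for all $m$. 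Hence $T$ is a fixed vector in $\R^{n+1}$, independent of the point on $M^n$ and (by the $P\equiv0$ hypothesis, since $\p_t$ of the relevant quantities is controlled) constant in time as well.

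Finally I would check that $\langle T, \nu \rangle = b^{ij}\nabla_i K^\alpha \langle \nabla_j F, \nu\rangle + K^\alpha \langle \nu,\nu\rangle = K^\alpha$, because $\langle \nabla_j F, \nu\rangle = 0$. Comparing with the soliton equation $\p_t F = -K^\alpha \nu = -\langle T, \nu\rangle\, \nu$, this exhibits $F(M^n, 0)$ as a translating soliton moving in the direction $-T$; equivalently, the flow $F(\cdot,t) + tT$ is stationary up to tangential reparametrization. The main obstacle — really the only nontrivial point — is justifying the passage from $P \equiv 0$ to $P_{ij} \equiv 0$ via the evolution equation \eqref{eq-Pevol}; once that is in hand, the computation of $\nabla_m T$ is a direct, if somewhat lengthy, application of the structure equations above.
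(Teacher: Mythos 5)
Your proposal is correct and follows essentially the same route as the paper: use the evolution equation \eqref{eq-Pevol} together with $P\equiv 0$ to force the positive-definite quadratic term $b^{ik}b^{jl}P_{ij}P_{kl}$ to vanish, hence $P_{ij}\equiv 0$, then show the very same vector $T:=b^{ij}\nabla_i K^\alpha \nabla_j F+K^\alpha\nu$ is parallel and satisfies $\la T,\nu\ra=K^\alpha$. The one point you flag as needing care --- passing from $P\equiv0$ to $P_{ij}\equiv0$ --- needs nothing beyond strict convexity, since $b^{ij}$ positive definite makes $b^{ik}b^{jl}P_{ij}P_{kl}$ a norm squared of $P_{ij}$.
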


\begin{proof}[Proof of Theorem \ref{thm-main}] 
In view of Corollary \ref{cor-main} and the standard parabolic regularity theory, for any given $\tau_i\to\infty$, we may take a further subsequence 
(which we still denote by $\tau_i$) so that \[ u(x',t+\tau_i)-\inf_{\Omega} u(x',\tau_i) \rightarrow u_\infty(x',t) \quad \text{ in }\,\, C^{\infty}_{loc} (\Omega\times (-\infty,\infty)).\] By Proposition \ref{prop-conventropy1} and Lemma \ref{lem-p=0}, $x_{n+1}=u_\infty(x',t)$ on $\Omega\times(-\infty,\infty)$ has to be a (possibly incomplete) translating soliton. It suffices to show this is actually the unique translating soliton defined on $\Omega$. i.e. $u_\infty(x',0)\equiv u_{\Omega}(x')$. 

Let us denote $u_{\infty,0}:= u_\infty(\cdot, 0)$, and the velocity of this possibly incomplete translating soliton {by} $\lambda \, e_{n+1}$. i.e. \[K^\alpha= \lambda \la -\nu,e_{n+1}\ra \Longleftrightarrow \left[ \fr{\det D^2u_{\infty,0}}{(1+|Du_{\infty,0}|^2)^{\fr{n+2}{2}}}\right]^\alpha = \lambda(1+|Du_{\infty,0} |^2)^{-1/2}\, \text{ on }\,\Omega.\]
This implies
\be\label{eq-speedlambda}\ba \lambda^{1/\alpha} |\Omega|&= \int_{\Omega} \fr{\det D^2u_{\infty,0}}{\left(\sqrt{1+|Du_{\infty,0}|^2}\right)^{n+2-\fr{1}{\alpha}}}\\
&= \int_{Du_{\infty,0}(\Omega)} \fr{1}{(\sqrt{1+|p|^2})^{n+2-\fr{1}{\alpha}}}\\
&\le\int_{\mathbb{R}^n} \fr{1}{(\sqrt{1+|p|^2})^{n+2-\fr{1}{\alpha}}} =:\Lambda(n,\alpha)<\infty \quad \text{ provided }\alpha>\fr12.\ea\ee
Note the {equality holds}  if and only if $|\mathbb{R}^n-Du_{\infty,0}(\Omega) |=0$. i.e. when $u_{\infty,0}=u_{\Omega}$; (see the characterization of $u_\Omega$ which is discussed after Definition \ref{def-u_Omega}).  

\smallskip 
Assume {without} loss of generality that $\Omega$ contains the origin. Since we can apply the previous argument  for every subsequence of the sequence $\tau_i$, this implies \be\label{eq-limsup}\limsup_{s\to\infty} u_t(0,s)\le   \left( \fr{\Lambda(n,\alpha)}{|\Omega|} \right)^\alpha=: \lambda_\Omega.\ee
 In view of the argument in the first paragraph, we can always find a converging subsequence. Thus it suffices to show \[\liminf _{s\to\infty} u_t(0,s) \ge \lambda_{\Omega}.\]
 On the contrary, suppose there is a sequence of time $\tau_i\to\infty$ such that $u_t(0,\tau_i) \le \lambda_\Omega(1-8\delta)$ for some $\delta>0$. Due to Proposition \ref{prop-utt}, there is a small $c>0$ such that $u_t(0,s)\le \lambda_\Omega (1 -4\delta)$ on $s\in[(1-c)\tau_i, \tau_i]$.
 By \eqref{eq-limsup}, for every fixed $\e>0$, $u(0,(1-c)\tau_i)\le (1+\e)\lambda_\Omega(1-c)\tau_i +O(1)$ as $i\to\infty$. Thus \[\ba u(0,\tau_i)&\le u(0,(1-c)\tau_i)+ \lambda_\Omega(1-4\delta)\, c\tau_i \\
 &{\le (1+ (\e(1-c)-4\delta c)) \lambda_\Omega \tau_i + O(1)}.\ea\] Choosing $\e:= \fr{2c\delta}{1-c}$, we get \[u(0,\tau_i) \le (1-2\delta c){\lambda_\Omega }\tau_i + O(1)\quad \text{as }i\to\infty.\]
i.e. the average speed evaluated along the sequence $\tau_i$ is strictly less than $\lambda_\Omega$.

  On the other hand, by considering {slightly slower translating soliton defined on larger domain} $$\bar u(x') := (1-\delta c)^{-\fr{1}{n\alpha}}u_{\Omega}((1-{\delta c})^{\fr{1}{n\alpha}}x')=u_{(1-{\delta c})^{-\fr{1}{n\alpha}}\Omega}(x')$$ and use $\bar u (\cdot)- L$ for large $L$ as an initial barrier as we did {in the proof of} Proposition \ref{prop-harnack12}, we obtain \[u(0,t) \ge (1-\delta c) t - O(1)\quad \text{as }t\to\infty. \] This is a contradiction and finishes the proof.

\end{proof}

\begin{proof}[Proof of Theorem \ref{thm-main2}] In this case, we assume that \[\mathcal{N}(0):=\int_{\Sigma_0} K^\alpha dg \le C \] and also that 
\[  \mathcal{J}(0):= \int_{\Sigma_t} P_{ij} b^{ij} K^\alpha \,dg \ge -C\]
for some $C<\infty$, with $ P_{ij} := \nabla^2_{ij} K^\alpha -b^{mn}\nabla_m h_{ij} \nabla_n K^\alpha + K^\alpha h^k_ih_{kj} $.  Note that for compact solutions Lemma \ref{lem-firstderivative}
gives  $\p_t \mathcal{N}= (\alpha-1) \mathcal{J}$  and hence this assumption corresponds to giving upper bounds on $\mathcal{N}$ and its first time derivative at $t=0$.  Then the proof goes same as the proof of Theorem \ref{thm-main} { except} that we use Proposition \ref{prop-conventropy2} instead of Proposition \ref{prop-conventropy1}.
\end{proof}

\appendix 
\section{Monotonicity formula}

Let $F:M^n\times[0,T]\to \mathbb{R}^{n+1}$ be a parametrization of a smooth strictly convex closed solution $\Sigma_t$ of the $\alpha$-GCF. 
We define the entropies
 \be\label{eq-N}\ds \mathcal{N}(t):= \int_{\Sigma_t} K^\alpha \, dg\ee
and 
 \be \label{eq-J} \mathcal{J}(t):= \int_{\Sigma_t} P_{ij} b^{ij} K^\alpha \,dg\ee
 where
\be  P_{ij} := \nabla^2_{ij} K^\alpha -b^{mn}\nabla_m h_{ij} \nabla_n K^\alpha + K^\alpha h^k_ih_{kj}.\ee
Here, $dg:=\sqrt {\det g} \, dx$ is  the intrinsic volume form inherited from the imbedding $F$.
In this section we will summarize and prove certain  entropy identities  and inequalities which are used in this work.

\begin{lemma}\label{lem-firstderivative}\begin{equation}
\frac{d}{dt}\mathcal{N}(t)=(\alpha-1) \mathcal{J}(t).
\end{equation}
\end{lemma}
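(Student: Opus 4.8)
The plan is to differentiate $\mathcal{N}(t)=\int_{\Sigma_t}K^\alpha\,dg$ directly, using the evolution equations from the Preliminaries, and then reorganize the resulting integral by one integration by parts on the closed hypersurface $\Sigma_t$.

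\textbf{Step 1 (differentiate under the integral).} By \eqref{eq-volume} we have $\partial_t\,dg=-K^\alpha H\,dg$, and by \eqref{eq-speed} we have $\partial_t K^\alpha=\alpha K^\alpha b^{ij}\nabla^2_{ij}K^\alpha+\alpha HK^{2\alpha}$. Combining these,
\[
\frac{d}{dt}\mathcal{N}(t)=\int_{\Sigma_t}\big(\partial_t K^\alpha-HK^{2\alpha}\big)\,dg=\int_{\Sigma_t}\alpha K^\alpha b^{ij}\nabla^2_{ij}K^\alpha\,dg+(\alpha-1)\int_{\Sigma_t}HK^{2\alpha}\,dg.
\]

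\textbf{Step 2 (integration by parts — the delicate point).} Since $b^{ij}$ is not the induced metric, the operator $b^{ij}\nabla^2_{ij}$ is not a priori self-adjoint, so one must track the divergence term carefully. On the closed manifold $\Sigma_t$, $\int_{\Sigma_t}\nabla_i V^i\,dg=0$; applying this with $V^i=\alpha K^\alpha b^{ij}\nabla_j K^\alpha$ gives
\[
\int_{\Sigma_t}\alpha K^\alpha b^{ij}\nabla^2_{ij}K^\alpha\,dg=-\int_{\Sigma_t}\alpha\,\nabla_i\!\big(K^\alpha b^{ij}\big)\,\nabla_j K^\alpha\,dg.
\]
Here the divergence-free identity \eqref{eq-divfree}, $\nabla_i(Kb^{ij})=0$, is what makes things work: writing $K^\alpha b^{ij}=K^{\alpha-1}\cdot(Kb^{ij})$ and using $K^{\alpha-1}\nabla_i K=\tfrac1\alpha\nabla_i K^\alpha$ yields $\nabla_i(K^\alpha b^{ij})=\tfrac{\alpha-1}{\alpha}\,b^{ij}\nabla_i K^\alpha$. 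Hence the first term equals $-(\alpha-1)\int_{\Sigma_t}b^{ij}\nabla_i K^\alpha\nabla_j K^\alpha\,dg$, and therefore
\[
\frac{d}{dt}\mathcal{N}(t)=(\alpha-1)\int_{\Sigma_t}\big(HK^{2\alpha}-b^{ij}\nabla_i K^\alpha\nabla_j K^\alpha\big)\,dg.
\]

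\textbf{Step 3 (identify the right-hand side with $(\alpha-1)\mathcal{J}(t)$).} Using \eqref{eq-nablaK} to rewrite $b^{ij}\nabla_m h_{ij}=\tfrac{1}{\alpha K^\alpha}\nabla_m K^\alpha$ together with the elementary identity $b^{ij}h^k_ih_{kj}=H$ (immediate in a basis diagonalizing $h_{ij}$), the definition \eqref{eq-def pij} gives
\[
P=b^{ij}P_{ij}=b^{ij}\nabla^2_{ij}K^\alpha-\tfrac{1}{\alpha K^\alpha}b^{ij}\nabla_i K^\alpha\nabla_j K^\alpha+K^\alpha H,
\]
which is of course just \eqref{eq-def p} combined with \eqref{eq-speed}. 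Multiplying by $K^\alpha$, integrating, and integrating by parts the $\nabla^2$-term exactly as in Step 2 (now with $V^i=K^\alpha b^{ij}\nabla_j K^\alpha$), the two gradient contributions collapse to a single $-\int_{\Sigma_t}b^{ij}\nabla_i K^\alpha\nabla_j K^\alpha\,dg$, so that $\mathcal{J}(t)=\int_{\Sigma_t}PK^\alpha\,dg=\int_{\Sigma_t}(HK^{2\alpha}-b^{ij}\nabla_i K^\alpha\nabla_j K^\alpha)\,dg$. Comparing with Step 2 gives $\tfrac{d}{dt}\mathcal{N}(t)=(\alpha-1)\mathcal{J}(t)$.

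I expect the only genuinely delicate point to be the integration by parts, i.e. keeping track of the non-metric tensor $b^{ij}$ and extracting the factor $\tfrac{\alpha-1}{\alpha}$ from $\nabla_i(K^\alpha b^{ij})$ via \eqref{eq-divfree}; everything else is bookkeeping with the evolution equations. One may also shorten Step 3 by substituting \eqref{eq-def p} and \eqref{eq-speed} to write $\alpha PK^\alpha=\partial_t K^\alpha-b^{ij}\nabla_i K^\alpha\nabla_j K^\alpha$ and integrating that identity directly.
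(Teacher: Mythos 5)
Your proof is correct and follows essentially the same route as the paper: differentiate $K^\alpha\,dg$ with \eqref{eq-speed} and \eqref{eq-volume}, then integrate by parts using $\nabla_i(Kb^{ij})=0$ to convert $\int K^\alpha b^{ij}\nabla^2_{ij}K^\alpha\,dg$ into $-\tfrac{\alpha-1}{\alpha}\int b^{ij}\nabla_iK^\alpha\nabla_jK^\alpha\,dg$. The only (cosmetic) difference is that you apply the same integration by parts a second time to reduce $\mathcal{J}$ to $\int(HK^{2\alpha}-b^{ij}\nabla_iK^\alpha\nabla_jK^\alpha)\,dg$ before comparing, whereas the paper recognizes the integrand as $PK^\alpha$ directly.
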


\begin{proof}

By equation  \eqref{eq-speed} and \eqref{eq-volume}, 
\be \label{eq-11}\ba \frac{d}{dt}(K^\alpha dg) =\frac{d}{dt}(K^\alpha) \, dg + K^\alpha \frac{d}{dt} dg&= \big ( \alpha K^\alpha b^{ij} \nabla^2_{ij}K^\alpha +\alpha HK^{2\alpha}-HK^{2\alpha} \big ) \, dg\\
&= \big (  \alpha \, b^{ij}\nabla^2_{ij}K^\alpha +(\alpha-1) HK^\alpha \big ) \, K^\alpha \, dg. \ea\ee
Hence

\[\ba  \fr{d}{dt}\mathcal{N} = \int_{\Sigma_t} \big (  \alpha \, b^{ij}\nabla^2_{ij} \, K^\alpha +(\alpha-1) HK^\alpha \big ) \, K^\alpha \, dg.\ea\]
Using  the following integration by parts 
\[\ba \int K^\alpha b^{ij} \nabla^2_{ij} K^\alpha dg & =\int  K^{\alpha-1} Kb^{ij} \nabla^2_{ij} K^\alpha \, dg \\
 & = - \int b^{ij} K\nabla_i K^{\alpha-1} \nabla_j K^\alpha dg\qquad  \big (  \text{by eq } \eqref{eq-divfree} \big )\\
 & = - \int \fr{\alpha-1}{\alpha} b^{ij} \nabla_iK^\alpha \nabla_j K^\alpha \, dg \ea, \] we conclude the desired identity 
 \[\ba  \fr{d}{dt}\mathcal{N}(t) &= (\alpha-1)\int_{\Sigma_t} \Big (  b^{ij}\nabla^2_{ij}K^\alpha-\fr{b^{ij}}{\alpha K^\alpha}\na_iK^\alpha \na_j K^\alpha+HK^\alpha \Big ) \, K^\alpha \,dg\\ 
&= (\alpha-1)\, \mathcal{J}(t).\ea  \]

\end{proof}

 \begin{theorem}\label{thm-mainappendix}  \[ \frac{d}{dt} \mathcal{J}(t) =  \int_{\Sigma}  \big ( b^{ik}b^{jl}P_{ij}P_{kl}+(2\alpha-1)(b^{ij}P_{ij})^2 \big ) \, K^\alpha dg.\]

 \begin{remark} Note that 
 \be \label{eq-P}b^{ij} P_{ij} = b^{ij} \nabla^2_{ij} K^\alpha -\fr{b^{ij}}{\alpha K^\alpha}\nabla_i K^\alpha \nabla_jK^\alpha +K^\alpha H\ee 
 which follows from \eqref{eq-nablaK}.
 
 \end{remark}
 
 \begin{proof}[Proof of Theorem] The evolution of $b^{kl}P_{kl}$,   shown in Theorem 3.2  \cite{Ch91},  is given by 
\be \label{eq-Pevol}\ba\frac{d}{dt} ( b^{kl}P_{kl})& = \alpha K^\alpha b^{ij} \nabla^2_{ij} \big ( b^{kl}P_{kl} )+ 2\alpha b^{ij} \nabla_i K^\alpha \nabla_j(b^{kl}P_{kl})+ b^{ik}b^{jl}P_{ij}P_{kl} + \alpha (b^{kl}P_{kl} \big )^2.\ea\ee
By \eqref{eq-Pevol} and \eqref{eq-11}, we obtain 
 \be \label{eq-12}\ba \frac{d}{dt} \mathcal{J}&= \int \frac{d}{dt} (b^{kl}P_{kl}) K^\alpha dg + \int  \big ( b^{kl}P_{kl} \big ) \, \big (\alpha b^{ij}\nabla^2_{ij} K^\alpha +(\alpha-1)HK^\alpha \big )\, K^\alpha \, dg \\
&= \int \big ( b^{ik}b^{jl}P_{ij}P_{kl} + \alpha (b^{kl}P_{kl})^2 \big ) \, K^\alpha \, dg + I \ea\ee 
where\be\label{eq-I} \ba I : =
&\int \Big (  \alpha K^\alpha b^{ij}\nabla^2_{ij}(b^{kl}P_{kl}) + 2\alpha b^{ij} \nabla_i K^\alpha \nabla_j (b^{kl}P_{kl}) \\
&\quad\quad\quad\quad+\alpha  (b^{ij}\nabla^2_{ij}K^\alpha)(b^{kl}P_{kl})+(\alpha-1)(b^{kl}P_{kl}) HK^\alpha \Big ) \, K^\alpha \, dg. \ea\ee

\smallskip 
To finish the proof of  the theorem it suffices to show that $I=(\alpha-1) \int (b^{kl}P_{kl})^2K^\alpha dg$. Note that for any two 
functions $F$ and $G$ we have the following integration by parts  formula:  
\be\label{eq-intbypart}\ba\int  \big ( \nabla^2_{ij}G \big ) \,   \big ( b^{ij} F K^\alpha \big ) \,  dg & = -\int \nabla_j G \,  \nabla_i\, \big  ( b^{ij} F  K^\alpha \big ) \, dg \\& = -\int \Big ( \nabla_j G \,  \nabla_i\, \big  ( ( b^{ij} K ) F  K^{\alpha -1}  \big ) \, dg\\
&= - \int b^{ij} \nabla_j G \, \nabla_i F \, K^\alpha + FK b^{ij} \, \nabla_jG \, \nabla_i K^{\alpha-1}  \, dg\\
(\text{by} \,\, \nabla_i (b^{ij} K) =0 ) \quad  &=- \int b^{ij} \nabla_j G\,  \nabla_i F \, K^\alpha \, dg -\fr{\alpha-1}{\alpha} \int F b^{ij} \, \nabla_j G \, {\nabla_i K^\alpha} \, dg .\ea\ee
Applying  formula \eqref{eq-intbypart} with $F:=\alpha K^\alpha$ and $G:= b^{kl}P_{kl}$ we obtain 
\[\int   \nabla^2_{ij} (b^{kl}P_{kl}) \, \big (  \alpha K^\alpha \, b^{ij} \big ) \, K^\alpha dg = (-2\alpha+1) \int  b^{ij} \nabla_i K^\alpha \, \nabla_j (b^{kl}P_{kl})  \, K^\alpha dg. \]
Hence,  
\[\ba \int \big ( \alpha K^\alpha b^{ij}\nabla^2_{ij}(b^{kl}P_{kl}) +& 2\alpha b^{ij} \nabla_i K^\alpha \nabla_j (b^{kl}P_{kl}) \big ) \, K^\alpha \, dg \\
&=\int  b^{ij} \nabla_i K^\alpha \nabla_j (b^{kl}P_{kl})K^\alpha dg \\ 
(\text{by eq } \eqref{eq-intbypart})\quad&= \int \Big ( - (b^{kl}P_{kl}) (b^{ij}\nabla^2_{ij}K^\alpha)K^\alpha-\fr{\alpha-1}{\alpha} (b^{kl}P_{kl})b^{ij} \nabla_i K^\alpha \nabla_j K^\alpha \Big ) \, dg.\ea\]
Plugging this into \eqref{eq-I} yields  
 \[\ba  I&=
\int \Big ( \alpha K^\alpha b^{ij}\nabla^2_{ij}(b^{kl}P_{kl}) + 2\alpha b^{ij} \nabla_i K^\alpha \nabla_j (b^{kl}P_{kl})\\
&\quad\quad\quad\quad+\alpha  (b^{ij}\nabla^2_{ij}K^\alpha)(b^{kl}P_{kl})+(\alpha-1)(b^{kl}P_{kl})HK^\alpha\Big ) \, K^\alpha \, dg \\
&=\int (\alpha-1)[b^{ij}\nabla^2_{ij} K^\alpha - \fr{1}{\alpha K^\alpha} b^{ij}\nabla_iK^\alpha \nabla_j K^\alpha +HK^\alpha] (b^{kl}P_{kl})K^\alpha \, dg \\
\big (\text{by}\, \eqref{eq-P} \big ) \quad &= (\alpha-1)\int( b^{kl}P_{kl})^2 \, K^\alpha \, dg.\ea\]
This finishes the proof of the theorem.  
 \end{proof}
 \end{theorem}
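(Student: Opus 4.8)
The plan is to differentiate $\mathcal{J}(t)=\int_{\Sigma_t}PK^\alpha\,dg$, where $P:=b^{ij}P_{ij}$, by the product rule, substitute the evolution equation for $P$ together with the evolution of the weighted measure $K^\alpha\,dg$, and then show that every term which is not manifestly of reaction type reassembles, after integration by parts, into $(\alpha-1)\int P^2K^\alpha\,dg$. Two inputs drive the computation. First, combining \eqref{eq-speed} and \eqref{eq-volume},
\[
\partial_t\big(K^\alpha\,dg\big)=\big(\alpha\,b^{ij}\nabla^2_{ij}K^\alpha+(\alpha-1)HK^\alpha\big)\,K^\alpha\,dg .
\]
Second, the evolution of $P=b^{kl}P_{kl}$ established by B. Chow (\cite{Ch91}, Theorem 3.2),
\[
\partial_t P=\alpha K^\alpha b^{ij}\nabla^2_{ij}P+2\alpha\,b^{ij}\nabla_iK^\alpha\nabla_jP+b^{ik}b^{jl}P_{ij}P_{kl}+\alpha P^2 .
\]
Writing $\frac{d}{dt}\mathcal J=\int(\partial_tP)\,K^\alpha\,dg+\int P\,\partial_t(K^\alpha dg)$ and inserting these, the reaction terms $b^{ik}b^{jl}P_{ij}P_{kl}+\alpha P^2$ appear immediately against $K^\alpha dg$, and the remainder is an integral $I$ of the transport and zeroth-order terms. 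The whole proof then reduces to the identity $I=(\alpha-1)\int P^2K^\alpha\,dg$, since $\alpha+(\alpha-1)=2\alpha-1$ produces the stated coefficient.

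The mechanism for handling $I$ is the divergence-free identity $\nabla_i(b^{ij}K)=0$ from \eqref{eq-divfree}, which makes the operator $K^\alpha b^{ij}\nabla^2_{ij}$ almost self-adjoint with respect to the measure $K^\alpha\,dg$. Concretely, for functions $F,G$ one writes $b^{ij}FK^\alpha=(b^{ij}K)(FK^{\alpha-1})$, integrates by parts in the $i$-index (so the $\nabla_i(b^{ij}K)$ term drops), and uses $\nabla_iK^{\alpha-1}=\tfrac{\alpha-1}{\alpha}K^{-1}\nabla_iK^\alpha$ to get the working formula
\[
\int(\nabla^2_{ij}G)\,b^{ij}FK^\alpha\,dg=-\int b^{ij}\nabla_iF\,\nabla_jG\,K^\alpha\,dg-\frac{\alpha-1}{\alpha}\int F\,b^{ij}\nabla_iK^\alpha\,\nabla_jG\,dg .
\]
Applying this with $F=\alpha K^\alpha$ and $G=P$ to the two highest-order terms of $I$, namely $\int\big(\alpha K^\alpha b^{ij}\nabla^2_{ij}P+2\alpha b^{ij}\nabla_iK^\alpha\nabla_jP\big)K^\alpha\,dg$, collapses them to a single gradient term $\int b^{ij}\nabla_iK^\alpha\nabla_jP\,K^\alpha\,dg$: the by-parts of the diffusion term yields a coefficient $-(2\alpha-1)$ which cancels the $2\alpha$ down to $1$.

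Next I would integrate by parts once more, now with $F=P$ and $G=K^\alpha$, to move the surviving derivative off $P$. This rewrites $\int b^{ij}\nabla_iK^\alpha\nabla_jP\,K^\alpha\,dg$ as $-\int P\,(b^{ij}\nabla^2_{ij}K^\alpha)\,K^\alpha\,dg-\tfrac{\alpha-1}{\alpha}\int P\,b^{ij}\nabla_iK^\alpha\nabla_jK^\alpha\,dg$. Adding back the two lower-order terms of $I$, namely $\alpha\int P\,(b^{ij}\nabla^2_{ij}K^\alpha)\,K^\alpha\,dg$ and $(\alpha-1)\int P\,HK^\alpha\cdot K^\alpha\,dg$, the two $\nabla^2K^\alpha$ contributions combine with overall coefficient $\alpha-1$, and the sum regroups as $(\alpha-1)\int P\big(b^{ij}\nabla^2_{ij}K^\alpha-\tfrac{1}{\alpha K^\alpha}b^{ij}\nabla_iK^\alpha\nabla_jK^\alpha+HK^\alpha\big)K^\alpha\,dg$. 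By the definition of $P$ in \eqref{eq-def p} together with \eqref{eq-speed} and \eqref{eq-nablaK}, the bracket is exactly $b^{ij}P_{ij}=P$, so $I=(\alpha-1)\int P^2K^\alpha\,dg$, which is what was needed.

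The one genuine difficulty is the bookkeeping: tracking the $\tfrac{\alpha-1}{\alpha}$ weight produced each time a derivative lands on a power of $K$, obtaining the $-(2\alpha-1)$ cancellation in the first integration by parts, and recognizing at the end that the leftover expression is precisely $(\alpha-1)$ times the Chow quantity $P$ paired against $PK^\alpha\,dg$. No further geometric input is needed beyond Chow's evolution of $P$ and the divergence-free structure of $b^{ij}K$; once those are in hand the identity is an exact, if somewhat intricate, integration-by-parts computation valid for all $\alpha>0$.
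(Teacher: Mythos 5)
Your proposal is correct and follows essentially the same route as the paper: the same splitting of $\tfrac{d}{dt}\mathcal{J}$ into the reaction terms plus a remainder $I$ via Chow's evolution of $P$ and the evolution of $K^\alpha\,dg$, the same integration-by-parts identity built on $\nabla_i(b^{ij}K)=0$, and the same two applications of it (first with $F=\alpha K^\alpha$, $G=P$ to collapse the diffusion and transport terms, then with $F=P$, $G=K^\alpha$) leading to $I=(\alpha-1)\int P^2K^\alpha\,dg$. The coefficient bookkeeping you describe matches the paper's computation exactly.
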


\begin{corollary}\label{cor-thmmain}
For $\alpha \geq \tfrac{n-1}{2n}$, we have 

 \[\frac{d}{dt} \int (b^{ij}P_{ij})K^\alpha dg \ge \big( \frac1n +2\alpha-1\big) \int (b^{ij}P_{ij})^2 K^\alpha dg \ge  \big( \frac{1}{n} +2\alpha-1\big)  \fr{ \big(\int (b^{ij}P_{ij})K^\alpha dg\big)^2}{\int K^{\alpha} \, dg}\ge 0.\]
 
\begin{proof} The $\alpha=1$ case is proven in Lemma 4.3 \cite{Ch91}. In the more general case, the result readily follows 
by the previous Theorem, the inequality 
\[ b^{ik}b^{jl} P_{ij}P_{kl} \ge \fr{1}{n} (b^{ij}P_{ij})^2\]
and the H\"older inequality. 
\end{proof}
\end{corollary}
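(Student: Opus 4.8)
The plan is to read off the result directly from Theorem \ref{thm-mainappendix}, which provides the exact identity
\[
\frac{d}{dt}\mathcal{J}(t)=\int_{\Sigma_t}\bigl(b^{ik}b^{jl}P_{ij}P_{kl}+(2\alpha-1)(b^{ij}P_{ij})^2\bigr)K^\alpha\,dg,
\]
together with two elementary pointwise and integral inequalities. First I would establish the pointwise bound $b^{ik}b^{jl}P_{ij}P_{kl}\ge \tfrac1n (b^{ij}P_{ij})^2$. The tensor $P_{ij}$ is symmetric: the Hessian term $\nabla^2_{ij}K^\alpha$ and the term $K^\alpha h^k_ih_{kj}$ are symmetric in $i,j$, and $\nabla_m h_{ij}$ is symmetric in $i,j$ by the Codazzi equations. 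Hence at any fixed point one may choose coordinates with $b^{ij}=\operatorname{diag}(\lambda_1,\dots,\lambda_n)$, $\lambda_i>0$, and diagonalize the $b$-self-adjoint endomorphism $b^{ik}P_{kj}$, whose eigenvalues I denote $\mu_1,\dots,\mu_n$; then $b^{ij}P_{ij}=\sum_i\mu_i$ and $b^{ik}b^{jl}P_{ij}P_{kl}=\sum_i\mu_i^2$, so the desired inequality is precisely Cauchy--Schwarz, $\bigl(\sum_i\mu_i\bigr)^2\le n\sum_i\mu_i^2$. Substituting this into the identity above gives
\[
\frac{d}{dt}\mathcal{J}(t)\ge \Bigl(\tfrac1n+2\alpha-1\Bigr)\int_{\Sigma_t}(b^{ij}P_{ij})^2K^\alpha\,dg,
\]
and I would observe that this first inequality holds for every $\alpha>0$, the hypothesis on $\alpha$ not yet being used.

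For the second inequality I would apply Cauchy--Schwarz (H\"older) with respect to the finite measure $K^\alpha\,dg$ on the compact hypersurface $\Sigma_t$:
\[
\mathcal{J}(t)^2=\Bigl(\int_{\Sigma_t} b^{ij}P_{ij}\,K^\alpha\,dg\Bigr)^2\le \int_{\Sigma_t}(b^{ij}P_{ij})^2 K^\alpha\,dg\;\cdot\;\int_{\Sigma_t}K^\alpha\,dg,
\]
i.e. $\int_{\Sigma_t}(b^{ij}P_{ij})^2K^\alpha\,dg\ge \mathcal{J}(t)^2/\mathcal{N}(t)$. Multiplying this through by the constant $\tfrac1n+2\alpha-1$ preserves the inequality precisely because the hypothesis $\alpha\ge\tfrac{n-1}{2n}$ is equivalent to $\tfrac1n+2\alpha-1\ge 0$; the same sign condition makes the resulting right-hand side nonnegative, which yields the final inequality $\ge 0$. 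The borderline case $\alpha=1$ is already contained in Chow's Lemma 4.3 in \cite{Ch91}, so no separate argument is needed there.

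The computation is entirely routine once Theorem \ref{thm-mainappendix} is available. The only two points demanding any care are the reduction of the pointwise tensor inequality to scalar Cauchy--Schwarz, which is where symmetry of $P_{ij}$ and positivity of $b^{ij}$ are used, and bookkeeping of the sign of $\tfrac1n+2\alpha-1$, which is exactly the assumption $\alpha\ge\tfrac{n-1}{2n}$; I do not anticipate any genuine obstacle beyond these.
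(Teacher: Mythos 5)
Your proposal is correct and follows exactly the paper's route: the identity from Theorem \ref{thm-mainappendix}, the pointwise trace inequality $b^{ik}b^{jl}P_{ij}P_{kl}\ge \tfrac1n(b^{ij}P_{ij})^2$ (which you justify in more detail by diagonalizing the $b$-self-adjoint endomorphism $b^{ik}P_{kj}$; note the symmetry of $\nabla_m h_{ij}$ in $i,j$ is immediate from that of $h_{ij}$ and does not need Codazzi), and Cauchy--Schwarz with respect to $K^\alpha\,dg$, with the hypothesis $\alpha\ge\tfrac{n-1}{2n}$ entering only as the sign condition $\tfrac1n+2\alpha-1\ge0$. No gaps.
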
 

 \begin{corollary}\label{cor-concavity} For $\alpha>0$ with $\alpha\neq1$, we have 
 \[\frac{d^2}{dt^2} \, \mathcal{N}^{\fr{\alpha}{1-\alpha}}\le 0. \]
 \begin{proof}
Recall that ${\ds \tfrac{d}{dt} N =(\alpha-1) \int(b^{ij}P_{ij})K^\alpha \, dg} $,  by  Lemma \ref{lem-firstderivative}. Hence 
\[\ba \tfrac{d^2}{dt^2}\mathcal{N}^{\fr{\alpha}{1-\alpha}}& = \tfrac{d}{dt} \big ( \fr{\alpha}{1-\alpha} \mathcal{N}_t \, \mathcal{N}^{\fr{2\alpha-1}{1-\alpha}}\big ) \\
 &= \tfrac{d}{dt}\big ( -{\alpha}\mathcal{J}\mathcal{N}^{\fr{2\alpha-1}{1-\alpha}}\big ) \\
 &= -\alpha \big ( (\tfrac{d}{dt}\mathcal{J})\, \mathcal{N}^{\fr{2\alpha-1}{1-\alpha}}-(2\alpha-1)\mathcal{J}^2\mathcal{N}^\fr{3\alpha-2}{1-\alpha} \big ) \\
 &\le 0\quad \text{ by Corollary \ref{cor-thmmain}.}\ea\]
 \end{proof}
 \end{corollary}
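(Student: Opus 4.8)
The plan is to rewrite everything in terms of the two entropies $\mathcal{N}$ and $\mathcal{J}$ and to reduce the claimed concavity to the differential inequality of Corollary \ref{cor-thmmain}. The only inputs needed are Lemma \ref{lem-firstderivative}, which identifies $\mathcal{N}_t$ with a multiple of $\mathcal{J}$, and the monotonicity bound $\mathcal{J}_t\geq (n^{-1}+2\alpha-1)\mathcal{J}^2/\mathcal{N}$; everything else is elementary calculus.

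First I would compute the first derivative. By Lemma \ref{lem-firstderivative} we have $\mathcal{N}_t=(\alpha-1)\mathcal{J}$, so since $\fr{\alpha}{1-\alpha}-1=\fr{2\alpha-1}{1-\alpha}$ and $\fr{\alpha-1}{1-\alpha}=-1$ the chain rule gives
\[
\frac{d}{dt}\mathcal{N}^{\fr{\alpha}{1-\alpha}}=\fr{\alpha}{1-\alpha}\,\mathcal{N}^{\fr{2\alpha-1}{1-\alpha}}\,\mathcal{N}_t=-\alpha\,\mathcal{J}\,\mathcal{N}^{\fr{2\alpha-1}{1-\alpha}},
\]
the sign of $\alpha-1$ being absorbed into $1/(1-\alpha)$, so that the identity is valid for every $\alpha\neq1$. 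Next I would differentiate once more, again substituting $\mathcal{N}_t=(\alpha-1)\mathcal{J}$ when the derivative falls on the power of $\mathcal{N}$. Using $\fr{2\alpha-1}{1-\alpha}-1=\fr{3\alpha-2}{1-\alpha}$ and factoring out $\mathcal{N}^{\fr{2\alpha-1}{1-\alpha}}$, this yields
\[
\frac{d^2}{dt^2}\mathcal{N}^{\fr{\alpha}{1-\alpha}}=-\alpha\,\mathcal{N}^{\fr{2\alpha-1}{1-\alpha}}\Big(\mathcal{J}_t-(2\alpha-1)\fr{\mathcal{J}^2}{\mathcal{N}}\Big).
\]
Since $\alpha>0$ and $\mathcal{N}>0$ on a closed strictly convex solution, the sign of the left-hand side is governed entirely by the bracket, so it remains to show $\mathcal{J}_t\geq(2\alpha-1)\mathcal{J}^2/\mathcal{N}$. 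This is exactly what Corollary \ref{cor-thmmain} delivers: $\mathcal{J}_t\geq(n^{-1}+2\alpha-1)\mathcal{J}^2/\mathcal{N}\geq(2\alpha-1)\mathcal{J}^2/\mathcal{N}$, the last step because $\mathcal{J}^2/\mathcal{N}\geq0$. Hence the bracket is nonnegative and the second derivative is $\leq0$.

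The only point requiring care is the bookkeeping of the exponents and of the factor $\alpha-1$ in the two successive chain-rule computations, so that the final sign is correct independently of whether $\alpha>1$ or $\alpha<1$. There is no genuine analytic obstacle here: the entire substance of the statement — the evolution equation for $b^{ij}P_{ij}$ from Theorem \ref{thm-mainappendix}, the trace inequality $b^{ik}b^{jl}P_{ij}P_{kl}\geq n^{-1}(b^{ij}P_{ij})^2$, and the Cauchy--Schwarz step $\int(b^{ij}P_{ij})^2K^\alpha\,dg\geq\mathcal{J}^2/\mathcal{N}$ — has already been packaged into Corollary \ref{cor-thmmain}.
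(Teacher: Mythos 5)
Your proof is correct and follows essentially the same route as the paper's: two applications of the chain rule together with $\mathcal{N}_t=(\alpha-1)\mathcal{J}$, reducing the concavity to the inequality $\mathcal{J}_t\ge(2\alpha-1)\mathcal{J}^2/\mathcal{N}$ supplied by Corollary \ref{cor-thmmain}. Your explicit factoring of $\mathcal{N}^{\frac{2\alpha-1}{1-\alpha}}$ and the observation that the final comparison only uses $n^{-1}>0$ are simply a slightly cleaner write-up of the same computation.
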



\bigskip
\bigskip
\bigskip

\centerline{\bf Acknowledgements}

\smallskip 

\noindent B. Choi has been partially supported by POSTECH new faculty grant 4.0022745.01, POSTECH Basic Science Research Institute grant 2021R1A6A1A10042944, and NSF grant DMS-1600658.

\noindent K. Choi has been partially supported by NSF grant DMS-1811267 and KIAS Individual Grant MG078901.

\noindent P. Daskalopoulos has been partially supported by NSF grants  DMS-1600658 and DMS-1900702.

\bigskip
\bigskip

\end{document}